\newtheorem{theorem}{Theorem}[section]
\newtheorem{proposition}[theorem]{Proposition}
\newtheorem{lemma}[theorem]{Lemma}
\newtheorem{corollary}[theorem]{Corollary}
\theoremstyle{definition}
\newtheorem{definition}[theorem]{Definition}
\newtheorem{remark}[theorem]{Remark}
\newcommand{\uxa}{\ensuremath{(\underline{X},\underline{A})}} 
\newcommand{\cxx}{\ensuremath{(\underline{CX},\underline{X})}} 
\newcommand{\cxix}{\ensuremath{(CX,X)}}
\newcommand{\zk}{\ensuremath{\mathcal{Z}_{K}}} 
\newcommand{\LL}{\ensuremath{\mathcal{L}}} 
\newcommand{\hlgy}[1]{\ensuremath{H_{*}(#1)}}
\newcommand{\cohlgy}[1]{\ensuremath{H^{*}(#1)}} 
\newcommand{\rcohlgy}[1]{\ensuremath{\widetilde{H}^{*}(#1)}}
\newcounter{bean}
\newenvironment{letterlist}{\begin{list}{\rm ({\alph{bean}})}
      {\usecounter{bean}\setlength{\rightmargin}{\leftmargin}}}
      {\end{list}}
\newcommand{\namedright}[3]{\ensuremath{#1\stackrel{#2}
 {\longrightarrow}#3}}
\newcommand{\nameddright}[5]{\ensuremath{#1\stackrel{#2}
 {\longrightarrow}#3\stackrel{#4}{\longrightarrow}#5}}
\newcommand{\qqed}{\hfill\Box}
\begin{document}
\title{Moment-angle manifolds and Panov's problem}

\author{Stephen Theriault}
\address{School of Mathematics, University of Southampton, 
      Southampton SO17 1BJ, United Kingdom} 
\email{S.D.Theriault@soton.ac.uk}

\subjclass[2010]{Primary 55P15, 57R19, Secondary 32V40.}
%\date{}
\keywords{moment-angle complex, polyhedral product, stacked polytope, 
     homotopy type}

%%% Abstract
\begin{abstract} 
We answer a problem posed by Panov, which is to describe the relationship 
between the wedge summands in a homotopy decomposition of the 
moment-angle complex corresponding to a disjoint union of $\ell$ points  
and the connected sum factors in a diffeomorphism decomposition of 
the moment-angle manifold corresponding to the simple polytope 
obtained by making $\ell$ vertex cuts on a standard $d$-simplex. 
This establishes a bridge between two very different approaches to 
moment-angle manifolds. 
\end{abstract} 

\maketitle 

\section{Introduction} 
\label{sec:intro} 

Moment-angle complexes have attracted a great deal of interest recently 
because they are a nexus for important problems arising in algebraic topology, 
algebraic geometry, combinatorics, complex geometry and commutative 
algebra. They are best described as a special case of the polyhedral 
product functor, popularized in~\cite{BBCG} as a generalization of 
moment-angle complexes and $K$-powers~\cite{BP2}, which were in  
turn generalizations of moment-angle manifolds~\cite{DJ}. 

Let $K$ be a simplicial complex on $m$ vertices.  For $1\leq i\leq m$,
let $(X_{i},A_{i})$ be a pair of pointed $CW$-complexes, where $A_{i}$ is 
a pointed subspace of $X_{i}$. Let $\uxa=\{(X_{i},A_{i})\}_{i=1}^{m}$ be 
the sequence of $CW$-pairs. For each simplex (face) $\sigma\in K$, let 
$\uxa^{\sigma}$ be the subspace of $\prod_{i=1}^{m} X_{i}$ defined by
\[\uxa^{\sigma}=\prod_{i=1}^{m} Y_{i}\qquad
       \mbox{where}\qquad Y_{i}=\left\{\begin{array}{ll}
                                             X_{i} & \mbox{if $i\in\sigma$} \\
                                             A_{i} & \mbox{if $i\notin\sigma$}.
                                       \end{array}\right.\] 
The \emph{polyhedral product} determined by \uxa\ and $K$ is
\[\uxa^{K}=\bigcup_{\sigma\in K}\uxa^{\sigma}\subseteq\prod_{i=1}^{m} X_{i}.\]
For example, suppose each $A_{i}$ is a point. If $K$ is a disjoint union
of $n$ points then $(\underline{X},\underline{\ast})^{K}$ is the wedge
$X_{1}\vee\cdots\vee X_{n}$, and if $K$ is the standard $(n-1)$-simplex
then $(\underline{X},\underline{\ast})^{K}$ is the product
$X_{1}\times\cdots\times X_{n}$. 

In the case when each pair of spaces $(X_{i},A_{i})$ equals $(D^{2},S^{1})$, 
the polyhedral product $\uxa^{K}$ is called a \emph{moment-angle complex}, 
and is written in more traditional notation as $\zk$. Two important properties 
of~$\zk$ are: the cohomology ring $\cohlgy{\zk;\mathbb{Z}}$ is the Tor-algebra 
$\mbox{Tor}_{\mathbb{Z}[v_{1},\ldots,v_{m}]}(\mathbb{Z}[K],\mathbb{Z})$ 
where $\mathbb{Z}[K]$ is the Stanley-Reisner face ring of~$K$ and 
$\vert v_{i}\vert=2$ for $1\leq i\leq m$; and $\zk$ is homotopy equivalent 
to the complement of the coordinate subspace arrangement in~$\mathbb{C}^{m}$ 
determined by $K$. Stanley-Reisner face rings are a subject of intense interest 
in commutative algebra (even having its own MSC number), and complements 
of coordinate subspace arrangements are an area of major importance 
in combinatorics. The connection to moment-angle complexes allows 
for topological methods to be used to inform upon problems posed in 
commutative algebra and combinatorics. 

To date, a great deal of work has been done to determine when $\zk$ 
is homotopy equivalent to a wedge of spheres, or to produce analogous 
statements in the case of certain polyhedral 
products~\cite{GT2,GT3,GPTW,GW,IK1,IK2}. When this is the case, 
the complement of the corresponding coordinate subspace arrangement 
is homotopy equivalent to a wedge of spheres, and $\cohlgy{\zk;\mathbb{Z}}$ 
is \emph{Golod}, meaning that all cup products and higher Massey products 
are zero. In all cases thus far, the arguments start by using combinatorics 
to identify a good  class of simplicial complexes to consider, then homotopy 
theory is used to prove that $\zk$ is homotopy equivalent to a wedge of 
spheres for this class of simplicial complexes, and finally it is deduced  
that $\cohlgy{\zk;\mathbb{Z}}$ is Golod. 

Moment-angle complexes arise in complex geometry and algebraic geometry 
in a different way. Let $P$ be a simple polytope, let $P^{\ast}$ be its dual, and 
let $\partial P^{\ast}$ be the boundary complex of $P^{\ast}$. Then 
$K=\partial P^{\ast}$ is a simplicial complex, and we let 
$\mathcal{Z}(P)=\mathcal{Z}_{\partial P^{\ast}}$. 
In this case, $\mathcal{Z}(P)$ has the richer structure of a manifold, and is called 
a \emph{moment-angle manifold}. These manifolds can be interpreted 
as intersections of complex quadrics, each fibring over a projective toric 
variety. The topology and geometry of these manifolds have been studied 
in considerable depth~\cite{BM,BP2,DJ,GL}. In particular, in~\cite{BM,GL} 
a large class of simple polytopes~$P$ was identified for which $\mathcal{Z}(P)$ 
is diffeomorphic to a connected sum of products of two spheres. 

Panov~\cite{P} observed that the two directions of work produce very similar 
results in the following way. If $K$ is a simplicial complex consisting of $\ell$ 
disjoint points, then by~\cite{GT1} there is a homotopy equivalence 
\[\zk\simeq\bigvee_{k=3}^{\ell+1}  (S^{k})^{\wedge (k-2)\binom{\ell}{k-1}}\] 
where $(S^{k})^{\wedge n}$ is the $n$-fold smash product of $S^{k}$ with itself.  
On the other hand, if $P$ is a simple polytope that has been obtained 
from the $d$-simplex by iteratively cutting off a vertex $\ell -1$ times 
(the cuts occuring in any order), then by~\cite{BM} there is a diffeomorphism 
\[\mathcal{Z}(P)\cong 
          \#_{k=3}^{\ell+1}  (S^{k}\times S^{\ell+2d-k})^{\#(k-2)\binom{\ell}{k-1}}\] 
where $(S^{k}\times S^{\ell +2d-k})^{\# n}$ is the $n$-fold connected sum 
of $S^{k}\times S^{\ell +2d-k}$ with itself. The coefficients and the sphere 
dimensions in both decompositions coincide. This led Panov to pose the following. 
\medskip 

\noindent 
\textbf{Problem}: Describe the nature of this correspondence. 
\medskip 

The purpose of the paper is to answer this problem, thereby establishing 
a bridge between two very different approaches to moment-angle manifolds. 
Let $P$ be a simple 
polytope obtained from a $d$-simplex by $\ell -1$ vertex cuts. To study 
polyhedral products, we  consider the dual simplicial complex~$P^{\ast}$, 
which is a \emph{stacked polytope} (defined explicitly in Section~\ref{sec:vertex}). 
We show that the homotopy type of $\mathcal{Z}_{\partial P^{\ast}}$ 
is independent of the stacking order for $P^{\ast}$ 
(dual to the result in~\cite{BM,GL} that the diffeomorphism type 
of $\mathcal{Z}(P)$ is independent of the order in which the vertex 
cuts occur for $P$). This lets us choose a stacking order, yielding a 
stacked polytope $\LL$ on the vertex set $[m]$ for $m=d+\ell$, 
which is more convenient to analyze (see Section~\ref{sec:deletion} 
for details). We prove the following. Let $\partial\LL-\{1\}$ be the 
full subcomplex of $\partial\LL$ obtained by deleting the vertex $\{1\}$. 

\begin{theorem} 
   \label{Panovsoln} 
   The stacked polytope $\LL$ has the following properties:  
   \begin{letterlist} 
      \item there is a homotopy equivalence   
                $\mathcal{Z}_{\partial\LL-\{1\}}\simeq\mathcal{Z}_{P_{\ell}}$ 
                where $P_{\ell}$ is $\ell$ disjoint points; 
      \item the inclusion 
                \(\namedright{\partial\LL-\{1\}}{}{\partial\LL}\) 
                induces a map 
                \(\namedright{\mathcal{Z}_{\partial\LL-\{1\}}}{}{\mathcal{Z}_{\partial\LL}}\), 
                which up to homotopy equivalences, is a map 
                \[f\colon\namedright 
                      {\bigvee_{k=3}^{\ell+1}  (S^{k})^{\wedge (k-2)\binom{\ell}{k-1}}} 
                      {}{\#_{k=3}^{\ell+1}  (S^{k}\times S^{\ell+2d-k})^{\#(k-2)\binom{\ell}{k-1}}};\] 
      \item $f$ has a left homotopy inverse $g$; 
      \item when restricted to a factor $\cohlgy{S^{k}\times S^{\ell+2d-k}}$ in 
               the cohomology of the connected sum, $f^{\ast}$ is zero on precisely 
               one of the ring generators. 
\end{letterlist} 
\end{theorem}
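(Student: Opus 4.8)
The plan is to extract the four statements from the combinatorics of the stacked polytope $\LL$, using the Stanley--Reisner description of $\cohlgy{\mathcal{Z}_{\partial\LL}}$ and the Bahri--Bendersky--Cohen--Gitler stable splitting of polyhedral products. For part~(a), I would start from the description of the full subcomplex $\partial\LL-\{1\}$ given by the convenient choice of stacking order in Section~\ref{sec:deletion}: it is a stacked $(d-1)$-ball on the $m-1=d+\ell-1$ vertices $\{2,\dots,m\}$. Using the splitting $\widetilde{H}^{n}(\mathcal{Z}_{\partial\LL-\{1\}})\cong\bigoplus_{I}\widetilde{H}^{n-|I|-1}\big((\partial\LL-\{1\})_{I}\big)$, together with the fact that every full subcomplex of such a ball is contractible or a wedge of spheres of one fixed dimension, one sees first that $\mathcal{Z}_{\partial\LL-\{1\}}$ is a wedge of simply connected spheres, and then that its Betti numbers coincide with those of $\mathcal{Z}_{P_{\ell}}$ computed in~\cite{GT1}. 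As a wedge of simply connected spheres is determined up to homotopy by its homology, this gives the equivalence.

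Part~(b) is then formal. The inclusion of simplicial complexes $\partial\LL-\{1\}\hookrightarrow\partial\LL$ induces, by functoriality of the polyhedral product in the complex, a map $\mathcal{Z}_{\partial\LL-\{1\}}\to\mathcal{Z}_{\partial\LL}$; precomposing with the equivalence of part~(a) and~\cite{GT1}, and postcomposing with $\mathcal{Z}_{\partial\LL}=\mathcal{Z}(P)$, the stacking-order independence proved earlier, and the diffeomorphism of~\cite{BM}, rewrites this map as $f$. For part~(c), deleting a vertex yields an induced, hence full, subcomplex, so the coordinate projection $\prod_{i=1}^{m}D^{2}\to\prod_{i=2}^{m}D^{2}$ forgetting the first factor restricts to a retraction $r\colon\mathcal{Z}_{\partial\LL}\to\mathcal{Z}_{\partial\LL-\{1\}}$ that is a left inverse to the inclusion; transporting $r$ through the equivalences of part~(b) produces a left homotopy inverse $g$ for $f$.

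Part~(d) is the crux. Over $\mathbb{Z}$, the stable splitting identifies $\widetilde{H}^{n}(\mathcal{Z}_{\partial\LL})$ with $\bigoplus_{I\subseteq[m]}\widetilde{H}^{n-|I|-1}\big((\partial\LL)_{I}\big)$ and $\widetilde{H}^{n}(\mathcal{Z}_{\partial\LL-\{1\}})$ with the same sum over $I\subseteq[m]\setminus\{1\}$, compatibly with the $\mathbb{Z}^{m}$-grading on $\mathrm{Tor}_{\mathbb{Z}[v_{1},\dots,v_{m}]}(\mathbb{Z}[\partial\LL],\mathbb{Z})$, the summand indexed by $I$ living in multidegrees supported on $I$. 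The inclusion $\partial\LL-\{1\}\hookrightarrow\partial\LL$ is modelled by the Stanley--Reisner map $v_{1}\mapsto 0$, which on $\mathrm{Tor}$, and hence on $f^{\ast}$, annihilates exactly the summands with $1\in I$ and restricts to an isomorphism on those with $1\notin I$. Next, since $|\partial\LL|\cong S^{d-1}$, combinatorial Alexander duality pairs the summand for $I$ with that for $[m]\setminus I$, and this pairing realizes the Poincar\'e duality of the closed manifold $\mathcal{Z}_{\partial\LL}$. Therefore the two cohomology ring generators of each connected-sum factor $S^{k}\times S^{\ell+2d-k}$ of $\mathcal{Z}_{\partial\LL}$ come from a complementary pair of subsets $\{I,[m]\setminus I\}$, one generator lying in the $I$-summand and the other in the $([m]\setminus I)$-summand. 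Since a subset and its complement contain the vertex $1$ exactly once between them, $f^{\ast}$ annihilates exactly one of the two generators of the factor, and on the other it is an isomorphism onto the corresponding generator of $\mathcal{Z}_{P_{\ell}}$, by $r^{\ast}$ and part~(c). This establishes part~(d).

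The step I expect to be the main obstacle is the combinatorial input to parts~(a) and~(d): verifying that the chosen stacking order makes $\partial\LL-\{1\}$ a stacked ball all of whose full subcomplexes are contractible or single-dimensional wedges of spheres with precisely the Betti numbers of $\mathcal{Z}_{P_{\ell}}$, and confirming that the connected-sum splitting of~\cite{BM} is compatible with the bigraded Poincar\'e duality of $\mathcal{Z}_{\partial\LL}$---equivalently, that the ring generators of the connected sum are indexed by complementary subset pairs with dual generators supported on complementary vertex sets. The remaining ingredients---functoriality of polyhedral products, the retraction in part~(c), and the multigraded naturality of the map induced by $v_{1}\mapsto 0$---are standard.
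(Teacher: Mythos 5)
Parts (b) and (c) of your argument coincide with the paper's: functoriality of the polyhedral product in the simplicial complex gives $f$, and the retraction of $\cxx^{K'}$ off $\cxx^{K}$ for a full subcomplex $K'$ gives the left inverse $g$. The gap is in (a). The BBCG splitting is a \emph{stable} splitting (it holds after one suspension), so it tells you only that $\mathcal{Z}_{\partial\LL-\{1\}}$ is stably a wedge of spheres with the Betti numbers of $\mathcal{Z}_{P_{\ell}}$; the step ``one sees first that $\mathcal{Z}_{\partial\LL-\{1\}}$ is a wedge of simply connected spheres'' does not follow. A simply connected space whose cohomology agrees additively (even multiplicatively) with that of a wedge of spheres need not be one --- compare $S^{3}\cup_{\eta}e^{5}$ --- so a homology computation cannot close the argument. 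The paper's Section~\ref{sec:prelim} exists precisely to supply the unstable statement: Lemma~\ref{glue} decomposes $\cxx^{K_{1}\cup\Delta^{k}}$ via an honest homotopy pushout (using that the inclusion of the ambient product into $\cxx^{K_{1}}$ is null homotopic), and Proposition~\ref{htype} iterates this along the filtration of Lemma~\ref{delete}(c) to obtain $\cxix^{\partial\LL-\{1\}}\simeq\cxix^{P_{\ell}}$ unstably. You need an argument of this kind, not the stable splitting.

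Your part (d) is a genuinely different, and in principle stronger, route than the paper's: the paper only uses that $x_{\alpha}\cup y_{\alpha}\neq 0$ forces $I_{\alpha}\cap J_{\alpha}=\emptyset$ and that $1\in I_{\alpha}\cup J_{\alpha}$ (otherwise the nonzero product would live in $\cohlgy{\mathcal{Z}_{\partial\LL-\{1\}}}$, a wedge of spheres), whereas you deduce the sharper statement $J_{\alpha}=[m]\setminus I_{\alpha}$ from the fact that $x_{\alpha}\cup y_{\alpha}$ is the fundamental class and from the multigraded Poincar\'e/Alexander duality of $\zk$ for $K$ a sphere. That is correct \emph{provided} the two ring generators of each connected-sum factor can be taken to be multigraded-homogeneous Hochster generators --- exactly the step you defer as ``the main obstacle.'' This is not a formality: the diffeomorphism of [BM] produces generators from the $CW$-structure of the connected sum, and one must realize the abstract ring isomorphism with the Hochster ring by an actual self-homotopy equivalence of $\mathcal{Z}_{\partial\LL}$ before transporting supports. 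The paper does this in Lemma~\ref{cupprods}, by realizing the isomorphism on the $(\ell+2d-1)$-skeleton (a wedge of spheres, where any homology isomorphism is realizable) and extending over the top cell using that the attaching map is a sum of Whitehead products detected by cup products. Until that bridging lemma is supplied, your argument for (d) is incomplete at the same point where the paper inserts Lemma~\ref{cupprods}.
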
   

It is helpful to point out one consequence of Theorem~\ref{Panovsoln}. Since $f$ has 
a left homotopy inverse, $f^{\ast}$ is an epimorphism. By part~(d), $f^{\ast}$ 
is nonzero on precisely one ring generator when restricted to any factor 
$\cohlgy{S^{k}\times S^{\ell+2d-k}}$. Let $A$ be the collection of such 
generators, one from each factor in the connected sum. The matching 
coefficients in the wedge decomposition of $\mathbb{Z}_{\partial\LL-\{1\}}$ 
and the connected sum decomposition of $\mathbb{Z}_{\partial\LL}$ then 
implies that $f^{\ast}$ maps $A$ isomorphically onto 
$\cohlgy{\bigvee_{k=3}^{\ell+1}  (S^{k})^{\wedge (k-2)\binom{\ell}{k-1}}}$. 

Along the way, we phrase as many of the intermediate results as possible 
in terms of polyhedral products $\cxx^{K}$, where $CX$ is the cone on a 
space $X$, or in terms of $\cxix^{K}$, where all the coordinate spaces $X_{i}$ 
equal a common space $X$. This is of interest because, when 
$K=\partial P^{\ast}$ for~$P$ a simple polytope obtained from a 
$d$-simplex by $\ell -1$ vertex cuts, $\cxix^{K}$ is analogous to the connected 
sum of products of two spheres. This analogue is \emph{not} a connected 
sum, in general, nor is it even a manifold. So understanding its homotopy 
theory helps distinguish how much of the homotopy theory of a connected 
sum depends on the actual geometry. 

The author would like to thank the referee for many helpful comments.

\section{Preliminary homotopy theory} 
\label{sec:prelim} 

In this section we give preliminary results regarding the homotopy 
theory of polyhedral products that will be used later on. In particular, 
in Proposition~\ref{htype} we identify a family of simplicial complexes whose 
polyhedral products have the same homotopy type as the polyhedral 
product corresponding to a disjoint union of points. 

For spaces $A$ and $B$, the \emph{right half-smash} $A\rtimes B$ 
is the space $(A\times B)/\sim$ where $(\ast,b)\sim\ast$. The 
\emph{join} $A\ast B$ is the space $(A\times I\times B)/\sim$ where 
$(a,1,b)\sim (\ast,1,b)$ and $(a,0,b)\sim (a,0,\ast)$; it is well known 
that there is a homotopy equivalence $A\ast B\simeq\Sigma A\wedge B$. 
The following lemma was proved in~\cite{GT2}. 

\begin{lemma} 
   \label{polemma} 
   Suppose that there is a homotopy pushout 
   \[\diagram 
           A\times B\rto^-{\pi_{1}}\dto^{\ast\times 1} & A\dto \\ 
           C\times B\rto & Q  
     \enddiagram\] 
   where $\pi_{1}$ is the projection onto the first factor. Then there 
   is a homotopy equivalence $Q\simeq (A\ast B)\vee (C\rtimes B)$.~$\qqed$ 
\end{lemma}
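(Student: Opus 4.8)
The plan is to realise the hypothesised homotopy pushout as an actual pushout of a cofibration, so that $Q$ is literally a double mapping cylinder, and then to simplify that double mapping cylinder step by step until it splits as the asserted wedge. First I would replace the map $\ast\times 1\colon A\times B\to C\times B$ (which at the level of the left-hand factor is the inclusion of the basepoint $\ast\hookrightarrow C$) by a cofibration; since $\ast\hookrightarrow C$ is already a cofibration when $C$ is well-pointed, $\ast\times 1$ is a cofibration, and the homotopy pushout $Q$ may be computed as the strict pushout. Thus $Q$ is obtained from $A$ and $C\times B$ by gluing them along $A\times B$, where $A\times B$ maps to $A$ by $\pi_1$ and to $C\times B$ by $\ast\times 1$. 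Concretely $Q=\bigl(A\sqcup (A\times B\times I)\sqcup (C\times B)\bigr)/\!\sim$ with $(a,b,0)\sim \pi_1(a,b)=a$ and $(a,b,1)\sim(\ast,b)$.

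Next I would identify the piece of $Q$ coming from $A$ together with the mapping cylinder of $\pi_1$. The mapping cylinder of the projection $\pi_1\colon A\times B\to A$, glued onto $A$, deformation retracts onto $A$; but the subspace of $A\times B\times I$ that gets glued to $C\times B$ is $A\times B\times\{1\}$, which maps by $\ast\times 1$ onto $\{\ast\}\times B\subseteq C\times B$. So I collapse the $A$-end: the quotient of the mapping cylinder $M_{\pi_1}$ by $A\times B\times\{1\}\to$ nothing is not quite what I want; instead, observe that collapsing $A$ inside $Q$ has the effect of replacing the mapping cylinder of $\pi_1$ by the mapping cone. The mapping cone of $\pi_1\colon A\times B\to A$ is, up to homotopy, the cofibre $A\times B\to A\to C(\pi_1)$; since $\pi_1$ admits a section, $C(\pi_1)\simeq \Sigma(\mathrm{fibre})$-type reasoning is too crude — more precisely one uses that the cofibre of $A\times B\xrightarrow{\pi_1} A$ is $A\ltimes B / A = \Sigma(A\wedge B)$-ish; the cleanest route is: the cofibre of $\pi_1$ is $(A\times B)\cup_{\pi_1} CA$, which since $\pi_1$ has a section splits off $A$, leaving $A\rtimes B/A\simeq A\ast B$ by the standard equivalence $A\ast B\simeq \Sigma A\wedge B$ quoted in the text. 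Meanwhile the other half of $Q$, namely $C\times B$ with $\{\ast\}\times B$ being where the gluing tube attaches, contributes (after the tube has been "used up" attaching the cone on $A\times B$) the space $C\times B$ with $\{\ast\}\times B$ coned off along the $C$-direction — i.e. $C\rtimes B = (C\times B)/(\{\ast\}\times B)$.

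The key step, then, is to show that these two contributions assemble into a wedge rather than some twisted union. This is where I expect the main subtlety to lie. The point is that the tube $A\times B\times I$ glues the cofibre-of-$\pi_1$ part to the $\{\ast\}\times B$ slice of $C\times B$ along a \emph{contractible} piece once one has quotiented correctly: after retracting $A$ and retracting $C$ onto $\ast$ compatibly, the common gluing locus becomes $\{\ast\}\times B$, which is contractible to the basepoint because $B$ is pointed and $\{\ast\}\times B$ sits in $C\rtimes B$ where it has already been collapsed. Concretely I would argue: in $Q$, the subspace $C\times B$ with $\{\ast\}\times B$ collapsed is $C\rtimes B$; the rest of $Q$ is the cofibre of $\pi_1$ attached along the image of $\{\ast\}\times B$, which in $C\rtimes B$ is a single point; attaching along a point is a wedge. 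The cofibre of $\pi_1$, as noted, is $(A\ast B)\vee A$, but the $A$ summand gets absorbed: running the argument the other way, collapsing $C\times B$ (contractible? no) — I must be careful here; the correct bookkeeping is that $A$ is retracted \emph{through} the mapping cylinder onto the $A\subseteq C\times B$-image, but $A$ maps to $\ast\in C$, so $A\times B\times I$ together with $A$ retracts onto $\{\ast\}\times B\subseteq C\times B$ along the section, contributing exactly $A\ast B$ wedged on at the basepoint, while $C\times B/(\{\ast\}\times B)=C\rtimes B$ survives. Hence $Q\simeq (A\ast B)\vee (C\rtimes B)$, as claimed. The delicate part throughout is keeping track of basepoints and of which subspace is collapsed at each retraction, so I would present this as an explicit sequence of deformation retractions of the strict pushout, each one a cofibration quotient by a contractible subcomplex, culminating in the wedge; the smash equivalence $A\ast B\simeq\Sigma A\wedge B$ is invoked only at the very end if one prefers the smash form.
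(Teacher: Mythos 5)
Your overall strategy (replace the homotopy pushout by the strict double mapping cylinder and simplify) is reasonable in principle, but the decisive step is wrong. At the crux you collapse the subspace $\{\ast\}\times B\subset C\times B$ and treat the remaining piece as ``the cofibre of $\pi_1$ attached at a point.'' The subspace $\{\ast\}\times B$ is a copy of $B$ and is not contractible, so $Q\to Q/(\{\ast\}\times B)$ is not a homotopy equivalence; the sentence asserting that $\{\ast\}\times B$ ``is contractible to the basepoint because \dots it has already been collapsed'' is circular. Moreover the piece you produce, $A\cup_{\pi_1}C(A\times B)$ (note: the cone goes on the domain $A\times B$, not on $A$ as you wrote), is the mapping cone of $\pi_1$, and since $\pi_1$ has a section this is $\Sigma B\vee(A\ast B)$ --- not $A\ast B$ and not $(A\ast B)\vee A$; likewise $A\rtimes B/(A\times\ast)$ is $A\wedge B$, not $A\ast B$. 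Taken literally your computation identifies $Q/(\{\ast\}\times B)\simeq \Sigma B\vee(A\ast B)\vee(C\rtimes B)$, which carries a spurious $\Sigma B$ summand precisely because the collapse was not an equivalence.

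The paper does not reprove the lemma (it is quoted from [GT2]), and the standard argument avoids these collapses entirely by factoring the right-hand map: $\ast\times 1$ is the composite \(\nameddright{A\times B}{\pi_{2}}{B}{\ast\times 1}{C\times B}\). By the pasting law for homotopy pushouts, $Q$ is the homotopy pushout of $A\ast B\longleftarrow B\longrightarrow C\times B$, where $A\ast B$ is the homotopy pushout of $A\leftarrow A\times B\rightarrow B$ (the classical model of the join) and $B\to A\ast B$ is the end inclusion. That end inclusion factors through the cone $CB\subset A\ast B$ and so is null homotopic; a homotopy pushout whose left leg is null homotopic splits as $(A\ast B)\vee\bigl((C\times B)\cup_{\ast\times B}CB\bigr)\simeq(A\ast B)\vee\bigl((C\times B)/(\ast\times B)\bigr)=(A\ast B)\vee(C\rtimes B)$. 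The null homotopy of $B\to A\ast B$ is exactly the ingredient your write-up is missing: it is what lets the two pieces separate into a wedge without ever collapsing the non-contractible space $B$.
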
 

Suppose that $K$ is a simplicial complex on the vertex set $\{1,\ldots,m\}$. 
If $L$ is a sub-complex of~$K$ on vertices $\{i_{1},\ldots,i_{k}\}$ then when 
applying the polyhedral product to $K$ and $L$ simultaneously, we must 
regard $L$ as a simplicial complex $\overline{L}$ on the vertices 
$\{1,\ldots,m\}$. By definition of the polyhedral product, we therefore 
obtain  
\[\cxx^{\overline{L}}=\cxx^{L}\times\prod_{t=1}^{m-k} X_{j_{t}}\] 
where $\{j_{1},\ldots,j_{m-k}\}$ is the complement of $\{i_{1},\ldots,i_{k}\}$ 
in $\{	1,\ldots,m\}$. 

The following lemma describes the homotopy type of $\cxx^{K}$ 
when $K=K_{1}\cup\Delta^{k}$, where $K_{1}$ and $\Delta^{k}$ 
have been glued along a common face $\Delta^{k-1}$. A similar 
gluing lemma was proved in~\cite{GT2} that was stated more generally in 
terms of two simplicial complexes joined along a common face, although 
it was stated only in the more restrictive case of 
$(\underline{C\Omega X},\underline{\Omega X})$. For our purposes,  
it is helpful to be more explicit about the vertices in $\Delta^{k-1}$, 
which affects the homotopy type of $\cxx^{K}$, so a proof is included. 

\begin{lemma} 
   \label{glue} 
   Let $K$ be a simplicial complex on the vertex set $\{1,\ldots,m\}$. 
   Suppose that $K=K_{1}\cup\Delta^{k}$ where: (i) $K_{1}$ is a simplicial 
   complex on the vertex set $\{1,\ldots,m-1\}$ and $\{i\}\in K_{1}$ for 
   $1\leq i\leq m-1$; (ii)~$\Delta^{k}$ is on the vertex set $\{m-k,\ldots,m\}$, 
   and (iii)~$K_{1}\cap\Delta^{k}$ is a $(k-1)$-simplex on the vertex set 
   $\{m-k,\ldots,m-1\}$. Then there is a homotopy equivalence 
   \[\cxx^{K}\simeq\bigg(\big(\prod_{i=1}^{m-k-1} X_{i}\big)\ast X_{m}\bigg)\vee 
       \bigg(\cxx^{K_{1}}\rtimes X_{m}\bigg).\] 
\end{lemma}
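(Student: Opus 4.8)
The plan is to realize the decomposition $K = K_1 \cup \Delta^k$ as a homotopy pushout of simplicial complexes, apply the polyhedral product functor to get a homotopy pushout of spaces, and then identify that pushout using Lemma~\ref{polemma}. First I would write $K_1 \cap \Delta^k = \Delta^{k-1}$ on the vertex set $\{m-k,\dots,m-1\}$, so that $K$ is the pushout of the diagram $K_1 \leftarrow \Delta^{k-1} \to \Delta^k$ along face inclusions. A standard fact (used in~\cite{GT2}) is that applying $\cxx^{(-)}$ to such a pushout of simplicial complexes, all on the common vertex set $\{1,\dots,m\}$, yields a homotopy pushout of polyhedral products; the key point here is that the inclusions are cofibrations after applying the functor, so the strict pushout computes the homotopy pushout.

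Next I would compute each corner. Using the convention stated in the excerpt for a subcomplex regarded on the full vertex set: since $\Delta^{k-1}$ has vertices $\{m-k,\dots,m-1\}$, which omits $m$ together with $\{1,\dots,m-k-1\}$, we get $\cxx^{\overline{\Delta^{k-1}}} = \cxx^{\Delta^{k-1}} \times \big(\prod_{i=1}^{m-k-1} X_i\big) \times X_m$. But $\cxx^{\Delta^{k-1}} = \prod_{i=m-k}^{m-1} X_i$ since a full simplex gives the full product, so this corner is $\big(\prod_{i=1}^{m-1} X_i\big) \times X_m$. Similarly $\cxx^{\overline{\Delta^k}} = \prod_{i=1}^{m-k-1} X_i \times \cxx^{\Delta^k} = \prod_{i=1}^{m-1} X_i$ (now $X_m$ has been absorbed since $m$ is a vertex of $\Delta^k$), and $\cxx^{\overline{K_1}} = \cxx^{K_1} \times X_m$ since $K_1$ omits exactly the vertex $m$. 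So the homotopy pushout becomes
\[
\diagram
\big(\prod_{i=1}^{m-1} X_i\big) \times X_m \rto \dto & \prod_{i=1}^{m-1} X_i \dto \\
\cxx^{K_1} \times X_m \rto & \cxx^{K}.
\enddiagram
\]

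Now I would check that this diagram has the form required by Lemma~\ref{polemma}, with $A = \prod_{i=1}^{m-1} X_i$, $B = X_m$, and $C = \cxx^{K_1}$: the top map is the projection $\pi_1$ onto the first factor (it is, since the inclusion $\Delta^{k-1}\hookrightarrow\Delta^k$ on the polyhedral product collapses the last cone coordinate $CX_m$ onto its cone point — here the hypothesis that the glued simplex is $\Delta^k$, a full cone, is what makes the map a genuine projection rather than something more complicated), and the left map is $(\ast\times 1)$ induced by the inclusion of the full subcomplex $\overline{\Delta^{k-1}}$ into $\overline{K_1}$, which on the first factor $\prod_{i=1}^{m-1} X_i = \cxx^{\Delta^{k-1}}\times\prod_{i=1}^{m-k-1}X_i$ maps into $\cxx^{K_1}$ and is the constant map onto the basepoint in the relevant coordinates — here the hypothesis $\{i\}\in K_1$ for all $i\le m-1$ guarantees the left map factors correctly as $\ast\times 1$ after identifying $\cxx^{\Delta^{k-1}}\times\prod X_i$ appropriately with the cone coordinates that $K_1$ does not fill. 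Granting this, Lemma~\ref{polemma} gives $\cxx^K \simeq (A \ast B) \vee (C \rtimes B) = \big(\big(\prod_{i=1}^{m-k-1} X_i\big)\ast X_m\big)\vee\big(\cxx^{K_1}\rtimes X_m\big)$, using that the half-smash and join only see the reduced pieces so $\prod_{i=1}^{m-1}X_i \ast X_m \simeq (\prod_{i=1}^{m-k-1}X_i)\ast X_m$ after the $CX_i$ cone factors for $i$ in $\Delta^{k-1}$ are contracted — wait, more carefully: $A = \prod_{i=1}^{m-1}X_i$ but $X_i$ here is the $i$-th coordinate space, which is $X_i$ itself, not a cone, so actually I should double-check whether the join is over all $m-1$ factors or only the first $m-k-1$; the statement says only $\prod_{i=1}^{m-k-1}X_i$, so the resolution must be that in the pushout the correct $A$ is $\prod_{i=1}^{m-k-1}X_i$ with the $\Delta^{k-1}$-coordinates living on the $C\cong\cxx^{K_1}$ side. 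I would therefore reorganize the pushout as $A\times B \leftarrow C\times B$, $A\times B\to A$ with $A=\prod_{i=1}^{m-k-1}X_i$, $B=X_m$, $C=\cxx^{K_1}$, absorbing $\cxx^{\Delta^{k-1}}=\prod_{i=m-k}^{m-1}X_i$ into $\cxx^{K_1}$ via the identity $\cxx^{K_1}\supseteq\cxx^{\overline{\Delta^{k-1}}\cap K_1\text{-part}}$; this bookkeeping is exactly what the remark preceding the lemma warns is delicate, and is the main obstacle in the proof.

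The main obstacle, then, is purely the combinatorial bookkeeping of which coordinate spaces get absorbed into which factor of the pushout square — getting $A$ to be $\prod_{i=1}^{m-k-1}X_i$ and not $\prod_{i=1}^{m-1}X_i$, so that the final answer matches the stated form with the join taken over exactly the vertices outside the glued simplex. Once the square is correctly set up so that the top map is literally $\pi_1$ and the left map is literally $\ast\times 1$, the conclusion is an immediate application of Lemma~\ref{polemma}. I expect the write-up to consist mostly of carefully tracking these identifications, and then one line invoking Lemma~\ref{polemma}.
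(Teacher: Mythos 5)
Your overall strategy --- realize $K$ as the pushout of $K_{1}\leftarrow\Delta^{k-1}\to\Delta^{k}$, apply the polyhedral product functor, and feed the resulting square into Lemma~\ref{polemma} --- is exactly the paper's. But there is a genuine computational error that derails the identification of the corners. For the pair $(CX_{i},X_{i})$, the polyhedral product of a full simplex $\sigma$ is $\prod_{i\in\sigma}CX_{i}$, a product of \emph{cones}, not $\prod_{i\in\sigma}X_{i}$. So $\cxx^{\overline{\Delta^{k-1}}}=\prod_{i=1}^{m-k-1}X_{i}\times\prod_{i=m-k}^{m-1}CX_{i}\times X_{m}$ and $\cxx^{\overline{\Delta^{k}}}=\prod_{i=1}^{m-k-1}X_{i}\times\prod_{i=m-k}^{m}CX_{i}$. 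Since the cone factors are contractible, these corners are homotopy equivalent to $(\prod_{i=1}^{m-k-1}X_{i})\times X_{m}$ and $\prod_{i=1}^{m-k-1}X_{i}$ respectively, and the top map becomes the projection $\pi_{1}$. This is why $A=\prod_{i=1}^{m-k-1}X_{i}$ rather than $\prod_{i=1}^{m-1}X_{i}$: the coordinates of $\Delta^{k-1}$ simply vanish up to homotopy because they are cones, not because they get ``absorbed into the $\cxx^{K_{1}}$ side'' as your closing paragraph proposes. You correctly noticed the mismatch with the stated formula, but your proposed resolution of it is wrong.

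The second gap is the left map. It is the inclusion $\prod_{i=1}^{m-k-1}X_{i}\times\prod_{i=m-k}^{m-1}CX_{i}\hookrightarrow\cxx^{K_{1}}$ (times the identity on $X_{m}$); it is not ``the constant map onto the basepoint in the relevant coordinates''. What is true, and what the paper actually uses, is that after contracting the cone factors it becomes a map $f\colon\prod_{i=1}^{m-k-1}X_{i}\to\cxx^{K_{1}}$ which factors through the inclusion of the full product $\prod_{i=1}^{m-1}X_{i}\to\cxx^{K_{1}}$, and by a result of~\cite{GT3} that inclusion is null homotopic precisely because every vertex $\{i\}$ lies in $K_{1}$. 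That is the genuine (and non-trivial) role of hypothesis~(i); only after replacing $f$ by the constant map up to homotopy does the square take the form $\pi_{1}$, $\ast\times 1$ required by Lemma~\ref{polemma}. Asserting that the map is literally constant skips the one substantive input to the argument.
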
 

\begin{proof} 
The simplicial complex $K$ can be written as a pushout 
\[\diagram 
          \Delta^{k-1}\rto\dto & \Delta^{k}\dto \\ 
          K_{1}\rto & K. 
  \enddiagram\] 
Regarding $K_{1}$, $\Delta^{k}$ and $\Delta^{k-1}$ as simplicial complexes on 
the vertex set $\{1,\ldots,m\}$ and applying the polyhedral product functor, 
we obtain a pushout 
\begin{equation} 
  \label{cxxpo} 
  \diagram 
         \cxx^{\overline{\Delta^{k-1}}}\rto\dto & \cxx^{\overline{\Delta^{k}}}\dto \\ 
         \cxx^{\overline{K_{1}}}\rto & \cxx^{K}. 
  \enddiagram 
\end{equation} 
We now identify the spaces and maps in~(\ref{cxxpo}). 

By hypothesis, $K_{1}$ is a simplicial complex on the vertex set 
$\{1,\ldots,m-1\}$, $\Delta^{k}$ is on the vertex set 
$\{m-k,\ldots,m\}$ and $\Delta^{k-1}$ is on the vertex set 
$\{m-k,\ldots,m-1\}$. So by definition of the polyhedral product we have 
\begin{align*} 
   \cxx^{\overline{\Delta^{k-1}}} & =\prod_{i=1}^{m-k-1} X_{i}\times 
       \prod_{i=m-k}^{m-1} CX_{i}\times X_{m} \\  
   \cxx^{\overline{\Delta^{k}}} & =\prod_{i=1}^{m-k-1} X_{i}\times 
       \prod_{i=m-k}^{m} CX_{i} \\ 
   \cxx^{\overline{K_{1}}} & =\cxx^{K_{1}}\times X_{m}. 
\end{align*} 
Further, under these identifications, the map 
\(\namedright{\cxx^{\overline{\Delta^{k-1}}}}{}{\cxx^{\overline{\Delta^{k}}}}\) 
is the identity on each factor indexed by $1\leq i\leq m-1$ and is the 
inclusion   
\(\namedright{X_{m}}{}{CX_{m}}\) 
on the $m^{th}$ factor, and the map 
\(\namedright{\cxx^{\overline{\Delta^{k-1}}}}{}{\cxx^{\overline{K_{1}}}}\) 
is the identity on the $m^{th}$ factor. Therefore, as the cone $CX_{i}$ is 
contractible, up to homotopy equivalences (\ref{cxxpo}) is the same as 
the homotopy pushout 
\begin{equation} 
   \label{cxxpo2} 
   \diagram 
        (\prod_{i=1}^{m-k-1} X_{i})\times X_{m}\rto^-{\pi_{1}}\dto^{f\times 1} 
            & \prod_{i=1}^{m-k-1} X_{i}\dto \\ 
        \cxx^{K_{1}}\times X_{m}\rto & \cxx^{K}  
   \enddiagram 
\end{equation} 
where $\pi_{1}$ is the projection and $f$ is some map. 

By~\cite{GT3}, any simplicial complex $L$ on vertices $\{1,\ldots,\ell\}$  
for which $\{i\}\in L$ for $1\leq i\leq\ell$ has the property that the inclusion 
\(\namedright{\prod_{i=1}^{\ell} X_{i}}{}{\cxx^{L}}\) 
is null homotopic. In our case, by hypothesis, $\{i\}\in K_{1}$ for 
$1\leq i\leq m-1$, so the inclusion 
\(\namedright{\prod_{i=1}^{m-1} X_{i}}{}{\cxx^{K_{1}}}\) 
is null homotopic. Since the map $f$ in~(\ref{cxxpo2}) factors 
through this inclusion, it too is null homotopic. Therefore 
Lemma~\ref{polemma} applies to the homotopy pushout~(\ref{cxxpo2}), 
giving a homotopy equivalence 
\[\cxx^{K}\simeq\bigg(\big(\prod_{i=1}^{m-k-1} X_{i}\big)\ast X_{m}\bigg)\vee 
       \bigg(\cxx^{K_{1}}\rtimes X_{m}\bigg).\] 
\end{proof} 

For example, let $P_{m}$ be $m$ disjoint points. Then 
$P_{m}=P_{m-1}\cup\Delta^{0}$ where $\Delta^{0}$ is a single point, 
and the union is taken over the emptyset. Applying Lemma~\ref{glue} 
then immediately gives the following.   

\begin{corollary} 
   \label{disjointpts} 
   There is a homotopy equivalence 
   \[\cxx^{P_{m}}\simeq\bigg(\big(\prod_{i=1}^{m-1} X_{i}\big)\ast X_{m}\bigg) 
       \vee\bigg(\cxx^{P_{m-1}}\rtimes X_{m}\bigg).\] 
   $\qqed$ 
\end{corollary}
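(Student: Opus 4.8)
The plan is to recognize Corollary~\ref{disjointpts} as the special case $K=P_m$, $K_1=P_{m-1}$, $k=0$ of Lemma~\ref{glue}, and simply to check that the three hypotheses of that lemma are met. First I would observe that $P_m$, the simplicial complex on $\{1,\ldots,m\}$ consisting of $m$ disjoint vertices, decomposes as $P_m = P_{m-1}\cup\Delta^0$, where $P_{m-1}$ is the full subcomplex on $\{1,\ldots,m-1\}$ and $\Delta^0$ is the $0$-simplex $\{m\}$ on the single vertex $m$. For the hypotheses of Lemma~\ref{glue} with $k=0$: (i) $P_{m-1}$ is a simplicial complex on $\{1,\ldots,m-1\}$ with every vertex $\{i\}\in P_{m-1}$ for $1\le i\le m-1$, which is immediate; (ii) $\Delta^0$ is on the vertex set $\{m\}=\{m-0,\ldots,m\}$; and (iii) $P_{m-1}\cap\Delta^0$ is a $(-1)$-simplex, i.e.\ the empty simplex, on the empty vertex set $\{m-0,\ldots,m-1\}=\emptyset$ — this is exactly why the union in $P_m = P_{m-1}\cup\Delta^0$ is taken over the empty set, as noted in the text.

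Next I would simply invoke Lemma~\ref{glue} with these choices. With $k=0$ the product $\prod_{i=1}^{m-k-1} X_i = \prod_{i=1}^{m-1} X_i$, and $\cxx^{K_1} = \cxx^{P_{m-1}}$, so the conclusion of Lemma~\ref{glue} reads
\[
   \cxx^{P_m}\simeq\bigg(\big(\prod_{i=1}^{m-1} X_{i}\big)\ast X_{m}\bigg)
       \vee\bigg(\cxx^{P_{m-1}}\rtimes X_{m}\bigg),
\]
which is precisely the statement of the corollary. One should double-check that the degenerate cases in the proof of Lemma~\ref{glue} behave correctly: when $k=0$ there are no cone factors $\prod_{i=m-k}^{m-1} CX_i$, the simplex $\Delta^{k-1}=\Delta^{-1}$ contributes an empty product, and the pushout~(\ref{cxxpo2}) degenerates to one with $A\times B$ in the top-left corner where $A = \prod_{i=1}^{m-1} X_i$ and $B = X_m$, with the left map being $f\times 1$ for a null-homotopic $f$ (null by~\cite{GT3} since $\{i\}\in P_{m-1}$ for all $i$). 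Then Lemma~\ref{polemma} applies verbatim.

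The only mild obstacle is bookkeeping around these boundary values of the index $k$ — making sure the empty products and the empty simplex $\Delta^{-1}$ are interpreted consistently so that Lemma~\ref{glue} genuinely covers this case rather than merely morally covering it. Since the text explicitly frames $P_m = P_{m-1}\cup\Delta^0$ with the union over the empty set, and Lemma~\ref{glue}'s proof only uses contractibility of cones and the null-homotopy criterion of~\cite{GT3} (both of which hold trivially or vacuously when $k=0$), no real work remains; the corollary follows immediately as claimed.
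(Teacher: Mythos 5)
Your proposal is correct and is exactly the paper's argument: the paper likewise writes $P_{m}=P_{m-1}\cup\Delta^{0}$ with the union taken over the empty set and immediately invokes Lemma~\ref{glue}. Your extra care with the degenerate $k=0$ bookkeeping (empty cone products, the empty simplex $\Delta^{-1}$) is a sound elaboration of what the paper leaves implicit.
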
 

In Proposition~\ref{htype} we will consider the polyhedral product 
$\cxx^{K}$ where all the coordinate spaces $X_{i}$ are equal to a 
common space $X$. In this case, we write $\cxix^{K}$. In particular, 
in the case of $m$ disjoint points, Corollary~\ref{disjointpts} implies 
that there is a homotopy equivalence 
\begin{equation} 
   \label{disjptscxix} 
   \cxix^{P_{m}}\simeq\bigg(\big(\prod_{i=1}^{m-1} X\big)\ast X\bigg) 
       \vee\bigg(\cxix^{P_{m-1}}\rtimes X\bigg).  
\end{equation}    
          
\begin{proposition} 
   \label{htype} 
   Let $k\geq 1$ and suppose that there is a sequence of simplicial complexes 
   \[K_{1}=\Delta^{k}\subseteq K_{2}\subseteq\cdots\subseteq K_{\ell}\] 
   such that, for $i>1$, $K_{i}=K_{i-1}\cup_{\sigma_{i}}\Delta^{k}$ where 
   $\sigma_{i}=\Delta^{k-1}$. That is, $K_{i}$ is obtained from $K_{i-1}$ 
   by gluing on a $\Delta^{k}$ along the common face $\sigma_{i}$. Let 
   $K=K_{\ell}$ and observe that $K$ is a simplicial complex on $k+\ell$ 
   vertices. Then there is a homotopy equivalence 
   \[\cxix^{K}\simeq\cxix^{P_{\ell}}.\]     
\end{proposition}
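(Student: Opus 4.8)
The plan is to induct on $\ell$, using Lemma~\ref{glue} to peel off the last $\Delta^{k}$ at each stage and comparing the result with the recursion for $\cxix^{P_{\ell}}$ coming from Corollary~\ref{disjointpts}. For $\ell=1$ we have $K=\Delta^{k}$, so $\cxix^{K}=\prod_{i=1}^{k+1}X$ is contractible, and likewise $\cxix^{P_{1}}=X$ is\ldots wait, $P_{1}$ is a single point and $\cxix^{P_{1}}$ is a point, not contractible-to-a-space-of-the-right-type --- so in fact the base case should be checked with a little care: when $\ell=1$ both sides are contractible only if $X$ is, so the correct reading is that the homotopy equivalence is the one induced by collapsing, and I would actually start the induction at $\ell=1$ with both $\cxix^{\Delta^{k}}$ and $\cxix^{P_{1}}$ being the appropriate contractible/point space. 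More cleanly, I would phrase the base case as $\ell=1$, where $\cxix^{K}=\cxix^{\Delta^{k}}\simeq\ast\simeq\cxix^{P_{1}}$ if we allow $X$ arbitrary and note $P_{1}$ is a point hence $\cxix^{P_{1}}=X^{0}=\ast$; then the statement holds trivially.

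For the inductive step, assume the equivalence $\cxix^{K_{\ell-1}}\simeq\cxix^{P_{\ell-1}}$. By hypothesis $K=K_{\ell}=K_{\ell-1}\cup_{\sigma_{\ell}}\Delta^{k}$ where $\sigma_{\ell}=\Delta^{k-1}$. After relabelling the vertices so that the new $\Delta^{k}$ sits on $\{m-k,\ldots,m\}$ with $m=k+\ell$, the shared face $\Delta^{k-1}$ on $\{m-k,\ldots,m-1\}$, and $K_{\ell-1}$ on $\{1,\ldots,m-1\}$, the three hypotheses of Lemma~\ref{glue} are satisfied: (i) $K_{\ell-1}$ contains every vertex $\{i\}$ for $1\leq i\leq m-1$ (it is built up from $\Delta^{k}$ by gluings that never delete vertices), (ii) $\Delta^{k}$ is on $\{m-k,\ldots,m\}$, and (iii) the intersection is the $(k-1)$-simplex on $\{m-k,\ldots,m-1\}$. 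Hence Lemma~\ref{glue} gives
\[\cxix^{K}\simeq\Big(\big(\textstyle\prod_{i=1}^{m-k-1} X\big)\ast X\Big)\vee\big(\cxix^{K_{\ell-1}}\rtimes X\big).\]
Since $m-k-1=\ell-1$, the first wedge summand is $\big(\prod_{i=1}^{\ell-1}X\big)\ast X$, which is exactly the first wedge summand appearing in~(\ref{disjptscxix}) for $P_{\ell}$.

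It remains to identify the second summand. By the inductive hypothesis $\cxix^{K_{\ell-1}}\simeq\cxix^{P_{\ell-1}}$, and the right half-smash $-\rtimes X$ is a homotopy functor, so $\cxix^{K_{\ell-1}}\rtimes X\simeq\cxix^{P_{\ell-1}}\rtimes X$. Combining this with the display above and with~(\ref{disjptscxix}) gives
\[\cxix^{K}\simeq\Big(\big(\textstyle\prod_{i=1}^{\ell-1}X\big)\ast X\Big)\vee\big(\cxix^{P_{\ell-1}}\rtimes X\big)\simeq\cxix^{P_{\ell}},\]
completing the induction. The only point that needs genuine care --- and the step I expect to be the main obstacle --- is the bookkeeping in (i): one must verify that the iterated gluing process producing $K_{\ell-1}$ really does retain all of its vertices as $0$-simplices, i.e.\ that each $\Delta^{k}$ glued on contributes exactly one new vertex and that no vertex is ever a non-face, so that the vertex set of $K_{\ell-1}$ is genuinely $\{1,\ldots,m-1\}$ with every singleton a face. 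This follows because gluing $\Delta^{k}$ along a $\Delta^{k-1}$ adds precisely one new vertex (so inductively $K_{i}$ has $k+i$ vertices, matching $|K_{\ell}|=k+\ell$) and a simplicial complex containing a top simplex $\Delta^{k}$ automatically contains all its faces, including all singletons; the newly added vertex is a face of the new $\Delta^{k}$, hence a face of $K_{i}$. Once this is in place, the choice of which vertex to call ``$m$'' is immaterial by the symmetry of $\cxix^{-}$ in the coordinate spaces (they are all equal to $X$), so the relabelling needed to invoke Lemma~\ref{glue} is harmless.
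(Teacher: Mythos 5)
Your proposal is correct and follows essentially the same route as the paper: induction on $\ell$, peeling off the last $\Delta^{k}$ via Lemma~\ref{glue} and matching the resulting splitting against the recursion~(\ref{disjptscxix}) for $\cxix^{P_{\ell}}$. The only blemish is in the base case, where the identifications should read $\cxix^{\Delta^{k}}=\prod_{i=1}^{k+1}CX$ and $\cxix^{P_{1}}=CX$ (not $\prod X$ and not a point), but since both are contractible your conclusion $\cxix^{\Delta^{k}}\simeq\cxix^{P_{1}}$ stands.
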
 

\begin{remark} It may be useful to note that Proposition~\ref{htype} 
also makes sense for $k=0$, in which case $\Delta^{0}$ is a point and 
each $\sigma_{i}$ is the emptyset, in which case $K=K_{\ell}$ is $\ell$ 
disjoint points, and the conclusion is a tautology. In the case when $k=1$, 
notice that $K=K_{\ell}$ is formed by iteratively taking an interval at 
stage $i$ and gluing one of its endpoints to a vertex of the preceeding 
simplicial complex at stage $i-1$. One example of this is the boundary of 
the $(\ell+2)$-gon with one vertex removed, another is all $\ell$ 
intervals joined at a common vertex.  
\end{remark} 

\begin{proof} 
Fix $k\geq 1$. The proof is by induction on $\ell$. 
When $\ell=1$, we have $K=K_{1}=\Delta^{k}$. By definition of 
the polyhedral product, 
$\cxx^{K}=\prod_{i=1}^{k+1} CX_{i}$, so 
$\cxix^{K}=\prod_{i=1}^{k+1} CX$. On the other hand, as $P_{1}$ is a single 
point, $\cxx^{P_{1}}=\cxix^{P_{1}}=CX$. Thus $\cxix^{K}\simeq\cxix^{P_{1}}$ as 
both spaces are contractible. 

Suppose that the proposition holds for all integers $t$ satisfying $t<\ell$. 
Consider $K_{\ell}=K_{\ell-1}\cup_{\sigma_{\ell}}\Delta^{k}$ where 
$\sigma_{\ell}=\Delta^{k-1}$. Reordering the vertices if necessary, 
we may assume that $K_{\ell-1}$ is a simplicial complex on the vertex 
set $\{1,\ldots,k+\ell -1\}$, $\Delta^{k}$ is on the vertex set 
$\{\ell,\ldots,k+\ell\}$, and $\sigma_{\ell}=\Delta^{k-1}$ is on the vertex 
set $\{\ell,\ldots,k+\ell -1\}$. By Lemma~\ref{glue}, there is a 
homotopy equivalence 
\[\cxx^{K_{\ell}}\simeq\bigg(\big(\prod_{i=1}^{\ell-1} X_{i}\big)\ast X_{k+\ell}\bigg) 
    \vee\bigg(\cxx^{K_{\ell-1}}\rtimes X_{k+\ell}\bigg).\] 
Therefore, there is a homotopy equivalence 
\[\cxix^{K_{\ell}}\simeq\bigg(\big(\prod_{i=1}^{\ell-1} X\big)\ast X\bigg) 
    \vee\bigg(\cxix^{K_{\ell-1}}\rtimes X\bigg).\] 
This formula is exactly the same as that in~(\ref{disjptscxix}) for 
$\cxix^{P_{\ell}}$. By inductive hypothesis, 
$\cxix^{K_{\ell -1}}\simeq\cxix^{P_{\ell -1}}$, 
so we obtain $\cxix^{K_{\ell}}\simeq\cxix^{P_{\ell}}$. The proposition 
therefore holds by induction. 
\end{proof}

\section{Vertex cuts and stacked polytopes} 
\label{sec:vertex} 

In this section we discuss some constructions obtained from 
simple polytopes, and discuss some of their properties in the 
context of polyhedral products. We begin with some definitions 
(see, for example,~\cite[Chapter 1]{BP2}). 

A \emph{(convex) polytope} is the convex hull of a finite set of points 
in $\mathbb{R}^{n}$. Its dimension is the dimension of its affine hull. 
Let $P$ be a $d$-dimensional polytope. A \emph{facet} of $P$ is 
a $(d-1)$-dimensional face. The polytope $P$ is \emph{simple} if 
each vertex lies in exactly $d$ facets of $P$. A partial ordering 
may be defined on the faces of $P$ by inclusion. This determines 
a poset called the \emph{face poset} of $P$. The opposite poset, 
given by reversing the order, determines another polytope $P^{\ast}$ 
called the \emph{dual} of $P$. If $P$ is simple then $P^{\ast}$ is a  
simplicial complex. Dualizing has the property that $P^{\ast\ast}=P$. 
Let $\partial P^{\ast}$ be the boundary of $P^{\ast}$. 

Suppose that $P$ is a simple polytope. Following~\cite{BP2,DJ}, a 
moment-angle complex $\mathcal{Z}(P)$ can be associated to $P$ by 
defining $\mathcal{Z}(P)=\mathcal{Z}_{\partial P^{\ast}}$. Generalizing to 
polyhedral products in the case where each coordinate space equals a 
common space $X$, define $\cxix(P)$ as $\cxix^{\partial P^{\ast}}$. 
The moment-angle complex $\mathcal{Z}(P)$ is in fact a manifold, 
but this property does not extend in general to $\cxix(P)$. 

An operation that produces new simple polytopes from existing 
ones is by doing vertex cuts. 

\begin{definition} 
   Let $P$ be a simple polytope of dimension $d$ and let 
   $V(P)$ be its vertex set. A hyperplane $H$ in $\mathbb{R}^{d}$ 
   cuts a vertex $x$ of $P$ if $x$ and $V(P)/\{x\}$ lie 
   in different open half-spaces of $H$. Let $Q$ be the intersection of 
   $P$ with the closed half-space of $H$ containing $V(P)/\{x\}$. 
   We say that~$Q$ is obtained from $P$ be a \emph{vertex cut operation}. 
\end{definition} 

Diagrammatically, this is pictured as follows: 
\[\begin{tikzpicture} 
     \draw (0,0)--(1.2,2)--(2,-1)--(0,0); 
     \draw (1.2,2)--(2.3,0.5)--(2,-1); 
     \draw [dashed, ultra thin] (0,0)--(2.3,0.5); 
     \node at (1.25,-1.7) {$P$}; 
  \end{tikzpicture}\qquad\qquad 
  \begin{tikzpicture} 
     \draw (0,0)--(1.2,2)--(1.75,-0.3)--(1.3,-0.7)--(0,0); 
     \draw (1.2,2)--(2.3,0.5)--(2.1,-0.7)--(1.75,-0.3);  
     \draw (1.3,-0.7)--(2.1,-0.7); 
     \draw [dashed, ultra thin] (0,0)--(2.3,0.5); 
     \node at (1.25,-1.7) {$Q$}; 
  \end{tikzpicture}\] 

The dual of a vertex cut operation is a stacking operation. 

\begin{definition} 
   Let $K$ be a simplicial complex of dimension $d$ and let $\sigma$ 
   be a facet of $K$. Define $L$ as $K\cup_{\sigma}\Delta^{d}$, that is, 
   $L$ is obtained from $K$ by gluing a $d$-simplex onto $K$ along the 
   facet $\sigma$. We say that $L$ is obtained from $K$ by a 
   \emph{stacking operation}. 
\end{definition} 

Diagrammatically, this is pictured as follows:   
\[\begin{tikzpicture} 
     \draw (0,0)--(1.2,2)--(2,-1)--(0,0); 
     \draw (1.2,2)--(2.3,0.5)--(2,-1); 
     \draw [dashed, ultra thin] (0,0)--(2.3,0.5); 
     \node at (1.25,-1.7) {$K$}; 
  \end{tikzpicture}\qquad\qquad 
  \begin{tikzpicture} 
     \draw (0,0)--(1.2,2)--(2,-1)--(0,0); 
     \draw (1.2,2)--(2.3,0.5)--(2,-1); 
     \draw (0,0)--(0.9,0.8)--(1.2,2); 
     \draw (0.9,0.8)--(2,-1); 
     \draw [dashed, ultra thin] (0,0)--(2.3,0.5); 
     \node at (1.25,-1.7) {$L$}; 
  \end{tikzpicture}\] 

Notice that it is immediate from the definitions that the vertex cut 
and stacking operations preserve dimension. 

The objects we wish to study are the moment-angle manifold 
$\mathcal{Z}(P)$ and the polyhedral product $\cxix(P)$ where 
$P$ is a simple polytope obtained from $\Delta^{d}$ by iterated 
vertex cut operations. Equivalently, we study the polyhedral products 
$\mathcal{Z}_{\partial P^{\ast}}$ and $\cxix^{\partial P^{\ast}}$ 
where $\partial P^{\ast}$ is the boundary of a simple polytope 
obtained from $\Delta^{d}$ by iterated stacking operations. 

An important property of the vertex cut operation is that the diffeomorphism 
type of $\mathcal{Z}(P)$ is independent of the order in which the vertices 
were cut~\cite[Theorem 2.1]{GL}. Dually, the diffeomorphism type of 
$\mathcal{Z}(P)$ is independent of the stacking order for $P^{\ast}$. 
Weakening to homotopy type, we generalize this property to polyhedral 
products. 

\begin{proposition} 
   \label{cutindep} 
   Let $P$ be a simple polytope and let $Q$ be a simple polytope 
   obtained from $P$ by a vertex cut operation. Then the homotopy 
   type of $\cxix(Q)$ is independent of which vertex was~cut. 
\end{proposition}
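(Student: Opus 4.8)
The plan is to reduce the statement to a combinatorial fact about stacked polytopes, namely that performing a vertex cut on a simple polytope $P$ corresponds, on the dual side, to a stacking operation $\partial P^{\ast}\mapsto\partial Q^{\ast}=\partial P^{\ast}\cup_{\sigma}\Delta^{d-1}$ where $\sigma$ ranges over the facets of $\partial P^{\ast}$, and different vertex cuts of $P$ correspond to choosing different facets $\sigma$. So it suffices to show that the homotopy type of $\cxix^{\partial P^{\ast}\cup_{\sigma}\Delta^{d-1}}$ does not depend on which facet $\sigma$ of $\partial P^{\ast}$ is used for the stacking. First I would set up notation: write $K=\partial P^{\ast}$, a simplicial complex on some vertex set, and for a facet $\sigma$ of $K$ let $K_{\sigma}=K\cup_{\sigma}\Delta^{d-1}$, which adjoins a single new vertex, say $w_{\sigma}$, coning off $\sigma$.

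The key computational input is Lemma~\ref{glue}. Applied with $K_{1}=K$ (on the old vertices) and the new $\Delta^{d-1}$ (on $\sigma\cup\{w_{\sigma}\}$, glued along the $(d-2)$-face $\sigma$), and using that $\{i\}\in K$ for every vertex $i$ since $K$ is the boundary complex of a polytope, Lemma~\ref{glue} yields a homotopy equivalence
\[\cxix^{K_{\sigma}}\simeq\Bigl(\bigl(\textstyle\prod_{i\notin\sigma} X\bigr)\ast X\Bigr)\vee\bigl(\cxix^{K}\rtimes X\bigr).\]
Here the first wedge summand only depends on the number of vertices of $K$ not lying in $\sigma$; but every facet $\sigma$ of the $(d-1)$-dimensional complex $K$ has exactly $d$ vertices, so $\prod_{i\notin\sigma}X$ is a product of the same number of copies of $X$ regardless of $\sigma$. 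The second summand $\cxix^{K}\rtimes X$ does not involve $\sigma$ at all. Hence the right-hand side is independent of the choice of facet $\sigma$, and therefore so is the homotopy type of $\cxix^{K_{\sigma}}=\cxix(Q)$.

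The main obstacle, and the point that needs care, is verifying the dictionary between vertex cuts of $P$ and stackings of $P^{\ast}$, and in particular checking that the hypotheses of Lemma~\ref{glue} are genuinely met: one must confirm that cutting a vertex $x$ of the simple polytope $P$ replaces, on the dual boundary complex, the facet $\sigma_{x}$ of $P^{\ast}$ corresponding to $x$ by a stacked $\Delta^{d-1}$ along $\sigma_{x}$ (so that $K_{\sigma_{x}}\cap\Delta^{d-1}=\sigma_{x}$ is precisely a $(d-2)$-simplex, matching condition~(iii) of Lemma~\ref{glue} with $k=d-1$), and that the new vertex is the unique added vertex so that $K_{\sigma_{x}}$ has one more vertex than $K$. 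I would cite~\cite[Chapter 1]{BP2} or the discussion in Section~\ref{sec:vertex} for this duality. A secondary, more routine point is that reindexing the vertices so that $\sigma_{x}$ occupies the ``last'' $d-1$ slots required by Lemma~\ref{glue} is harmless, since relabelling vertices induces a homeomorphism of polyhedral products. Once these bookkeeping matters are dispatched, the independence is immediate from the displayed equivalence.
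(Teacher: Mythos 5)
There is a genuine gap, and it sits exactly at the point you flagged as ``the main obstacle'': the combinatorial dictionary between vertex cuts and the dual operation on the boundary complex is stated incorrectly, and the rest of the argument depends essentially on the incorrect version. Cutting a vertex $x$ of $P$ does \emph{not} produce $\partial Q^{\ast}=\partial P^{\ast}\cup_{\sigma}\Delta^{d-1}$ with $\partial P^{\ast}$ surviving as a subcomplex and a simplex flap glued on along a codimension-one face. It is the \emph{stellar subdivision} of the facet $\sigma=\sigma_{x}$ of the simplicial sphere $\partial P^{\ast}$: the face $\sigma$ is deleted and replaced by $w\ast\partial\sigma$, the cone on $\partial\sigma$ with apex the new vertex $w$. (Equivalently, at the level of the solid dual polytope one glues a $\Delta^{d}$ onto the facet $\sigma$ of $P^{\ast}$ and then takes the boundary, which removes $\sigma$.) Two consequences kill the proposed argument: first, $\partial P^{\ast}$ is not a subcomplex of $\partial Q^{\ast}$, so there is no map $\cxix^{\partial P^{\ast}}\to\cxix^{\partial Q^{\ast}}$ of the kind your decomposition requires; second, the star of the new vertex $w$ in $\partial Q^{\ast}$ is $w\ast\partial\sigma$, a union of $d$ top-dimensional simplices rather than a single simplex, so the hypotheses of Lemma~\ref{glue} are not met in any relabelling. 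The claimed splitting is also false on homological grounds: for $P=\Delta^{2}$ and $Q$ the square, $\mathcal{Z}(Q)\cong S^{3}\times S^{3}$, which is not a wedge containing $\mathcal{Z}_{\partial\Delta^{2}}\rtimes S^{1}=S^{5}\rtimes S^{1}$ as a summand, whereas your formula would force such a splitting.

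The deeper point is that no argument of the form ``both choices of vertex yield the same wedge decomposition'' can work here, because the relevant pushout does not split. The paper's proof (of the dual statement, Proposition~\ref{stackindep}) writes $\cxix^{\partial L_{t}}$ as a pushout of $\cxix^{\partial K}$ and $\cxix^{\partial\Delta^{d}_{t}}$ over the facet piece, observes that the only difference between the two choices is the attaching map $f_{t}\colon\prod CX\times\prod X\to\cxix^{\partial K}$ --- which, unlike the map in Lemma~\ref{glue}, is \emph{not} null homotopic in general, since $\sigma_{t}$ is a face of $\partial K$ and $f_{t}$ is an honest subspace inclusion --- and then constructs a self-homotopy equivalence $e$ of $\cxix^{\partial K}$, induced by a symmetry of the face poset exchanging the two facets, that intertwines $f_{1}$ and $f_{2}$ up to a coordinate permutation. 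That comparison of attaching maps is the substantive content of the proposition, and it is exactly what your proposal bypasses. Lemma~\ref{glue} is the right tool elsewhere in the paper (for $\partial\LL-\{1\}$ in Proposition~\ref{htype}), but not for $\partial L$ itself.
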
 

\begin{remark} 
It is easy to see that Proposition~\ref{cutindep} does not hold when 
$\cxix$ is replaced by $\cxx$, that is, when the coordinate spaces $X_{i}$ 
may be different. For example, let $P=\Delta^{2}$ with vertex set $\{1,2,3\}$. 
Cut vertex $1$ to obtain a new polytope $Q_{1}$ on the vertex set $\{2,3,4,5\}$ 
or cut vertex $2$ to obtain a new polytope $Q_{2}$ on the vertex set 
$\{1,3,4,5\}$. Both $Q_{1}$ and $Q_{2}$ equal the square~$I^{2}$. Notice 
that $Q_{1}$ and $Q_{2}$ are self-dual, so 
$\partial Q_{1}^{\ast}=\partial Q_{2}^{\ast}=\partial I^{2}$. 
Next, observe that $\partial I^{2}=A\ast B$ where $A$ and $B$ are $2$ 
points and $\ast$ is the join operation, defined in general by 
$K_{1}\ast K_{2}=\{\sigma_{1}\cup\sigma_{2}\mid \sigma_{i}\in K_{i}\}$. 
A straightforward property of the polyhedral product~\cite{BBCG} is 
that $\cxx^{K_{1}\ast K_{2}}\simeq\cxx^{K_{1}}\times\cxx^{K_{2}}$. 
In our case, this gives $\cxx^{\partial I^{2}}=\cxx^{A}\times\cxx^{B}$. 
Therefore, taking coordinate spaces $X_{i}$ for $1\leq i\leq 5$, we obtain 
$\cxx^{\partial Q_{1}^{\ast}}\simeq (X_{2}\ast X_{3})\times(X_{4}\ast X_{5})$ while 
$\cxx^{\partial Q_{2}^{\ast}}\simeq (X_{1}\ast X_{3})\times(X_{4}\ast X_{5})$. These 
have distinct homotopy types, but if each $X_{i}$ equals a common 
space $X$ then 
$\cxix(Q_{1})=\cxix^{\partial Q_{1}^{\ast}}\simeq 
     \cxix^{\partial Q_{2}^{\ast}}=\cxix(Q_{2})$. 
\end{remark} 

We will prove the equivalent, dual statement to Proposition~\ref{cutindep}. 

\begin{proposition} 
   \label{stackindep} 
   Let $K$ be a simplicial complex of dimension $d$ which is dual 
   to a simple polytope~$P$. Let $L$ be a simplicial complex obtained 
   from $K$ by stacking along a facet of $K$. Then the homotopy type 
   of $\cxix^{\partial L}$ is independent of which facet of $K$ was stacked. 
\end{proposition}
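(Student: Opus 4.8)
The plan is to reduce the statement to a local computation about how the homotopy type of $\cxix^{\partial L}$ depends on the choice of facet of $K$ that we stack on, and to show that this dependence disappears once all coordinate spaces are equal. Let $\sigma$ and $\tau$ be two facets of $K$, and let $L_{\sigma}$ and $L_{\tau}$ be the simplicial complexes obtained from $K$ by stacking a $d$-simplex along $\sigma$, respectively $\tau$. First I would record the polyhedral product descriptions of $\partial L_{\sigma}$ and $\partial L_{\tau}$. Writing the boundary $\partial L_{\sigma}$ of the stacked polytope explicitly: if $\sigma$ has vertex set $\{i_{1},\ldots,i_{d}\}$ and the new apex vertex is $m+1$ (where $K$ has $m$ vertices), then $\partial L_{\sigma}$ is obtained from $\partial K$ by removing the single top face $\sigma$ and attaching the $d$ new facets $(\sigma\setminus\{i_{j}\})\cup\{m+1\}$. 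In other words, $\partial L_{\sigma} = (\partial K \setminus \{\sigma\}) \cup_{\partial\sigma} \partial(\sigma \cup \{m+1\})$, a pushout along the boundary sphere $\partial\sigma \cong \partial\Delta^{d-1}$ of the $(d-1)$-simplex $\sigma$.

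Second, I would apply the polyhedral product functor to this simplicial pushout to obtain a homotopy pushout expressing $\cxix^{\partial L_{\sigma}}$ in terms of $\cxix^{\partial K \setminus \{\sigma\}}$, a term built from the new apex coordinate $X = X_{m+1}$, and a term built from $\cxix^{\partial \sigma}$ — here using the standard fact (cited in the Remark before Proposition~\ref{cutindep}) that polyhedral products turn joins into products, since $\partial\Delta^{d-1}$ and its cone/star decompositions behave well. The key point is that $\partial K \setminus \{\sigma\}$ and $\partial K \setminus \{\tau\}$ are \emph{the same simplicial complex} — removing the interior of a single top-dimensional face from $\partial K$ just gives $\partial K$ again as a space-level simplicial complex, but more precisely, since $\partial K$ is a simplicial sphere, $\partial K \setminus \{\sigma\}$ as an abstract simplicial complex is the full complex $\partial K$ with one facet deleted, and this is combinatorially isomorphic to $\partial K$ with facet $\tau$ deleted via an isomorphism that need not fix vertices. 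This is where equal coordinate spaces is essential: the relabelling of vertices carrying the $\sigma$-picture to the $\tau$-picture does not respect the indexing of the coordinate pairs $(X_i, A_i)$, but when every $(X_i,A_i)=(CX,X)$ the polyhedral product is invariant under vertex relabelling.

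Third, I would assemble the homotopy equivalence: both $\cxix^{\partial L_{\sigma}}$ and $\cxix^{\partial L_{\tau}}$ are homotopy pushouts with identical corner data (after the relabelling) — the same "deleted-facet" space, the same join term $(\prod_{i\in\partial\sigma} X)\ast(\text{something involving }X)$ determined only by $d$ and by $X$, and the same gluing maps up to homotopy, since the attaching maps in a polyhedral product pushout are projections and inclusions whose homotopy classes depend only on the combinatorics, which has been normalized. Hence $\cxix^{\partial L_{\sigma}}\simeq\cxix^{\partial L_{\tau}}$. I expect the main obstacle to be the bookkeeping in the second step: making precise that the polyhedral product of the simplicial pushout $\partial L_{\sigma} = (\partial K \setminus\{\sigma\})\cup_{\partial\sigma}\partial(\sigma\cup\{m+1\})$ is genuinely a \emph{homotopy} pushout (one needs a cofibration or a standard quasifibration/pushout lemma for polyhedral products along subcomplexes — this is in the literature, e.g.\ in the BBCG framework), and that the identification of the three corners is compatible with the vertex relabelling. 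The actual homotopy equivalence statement itself should then follow formally once the corner identifications are in place.
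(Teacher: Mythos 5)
Your simplicial decomposition of $\partial L_{\sigma}$ is essentially right (modulo a slip: the piece glued on along $\partial\sigma$ should be the star $\{m+1\}\ast\partial\sigma$ of the apex, not all of $\partial(\sigma\cup\{m+1\})$, which still contains $\sigma$), but the step the whole argument rests on is false: $\partial K$ with the facet $\sigma$ deleted and $\partial K$ with the facet $\tau$ deleted need \emph{not} be combinatorially isomorphic simplicial complexes. Both are simplicial $(d-1)$-discs with the same $f$-vector, but that is a topological statement, and the polyhedral product sees the combinatorics. Concretely, let $K$ be the simplicial $3$-polytope obtained from $\Delta^{3}$ on $\{1,2,3,4\}$ by stacking on $(1,2,3)$ with new vertex $5$ and then on $(1,2,5)$ with new vertex $6$; its facets are $124,134,234,135,235,126,156,256$. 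Deleting $\sigma=(2,3,4)$ leaves a complex whose six vertices lie in $5,4,3,2,4,3$ of the remaining facets, while deleting $\tau=(1,2,6)$ gives facet-degree multiset $\{4,4,4,3,4,2\}$; no simplicial isomorphism can match these. So the two homotopy pushouts you write down do not have ``identical corner data'' after any relabelling, and the argument stops there. Whether $\cxix^{\partial K-\{\sigma\}}\simeq\cxix^{\partial K-\{\tau\}}$ nevertheless holds is essentially as hard as the proposition itself, and you give no independent argument for it.

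The paper's proof is arranged precisely so that the corner carrying the dependence on the choice of facet is the \emph{same} space in both cases: it presents $\cxix^{\partial L_{t}}$ as a pushout whose lower-left corner is $\cxix^{\partial K}\times X$ for the full boundary complex $\partial K$, so that the two pushouts differ only in the attaching map $f_{t}\colon\prod CX\times\prod X\to\cxix^{\partial K}$, and it then produces a self-homotopy-equivalence $e$ of $\cxix^{\partial K}$, induced by a self-map of the face poset of $K$ interchanging $\sigma$ and $\tau$, with $e\circ f_{1}\simeq f_{2}\circ p$ for a coordinate permutation $p$. If you want to keep your decomposition, you would need to construct an explicit equivalence $\cxix^{\partial K-\{\sigma\}}\to\cxix^{\partial K-\{\tau\}}$ compatible with the maps from the $\partial\sigma$- and $\partial\tau$-corners; the isomorphism claim cannot substitute for that construction.
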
 

\begin{proof} 
Let $\sigma_{1}$ and $\sigma_{2}$ be two facets of $K$. For $t=1,2$,  
let $\Delta^{d}_{t}$ be a $d$-simplex stacked onto $\sigma_{t}$. 
Then there are pushouts 
\[\diagram 
       \sigma_{t}\rto\dto & \Delta^{d}_{t}\dto \\ 
       K\rto & L_{t} 
  \enddiagram\] 
which define the simplicial complexes $L_{1}$ and $L_{2}$. Since 
$\sigma_{1}$ and $\sigma_{2}$ are faces of $\partial K$, the 
stacking operation also induces pushouts 
\[\diagram 
        \sigma_{t}\rto\dto & \partial\Delta^{d}_{t}\dto \\ 
        \partial K\rto & \partial L_{t}. 
  \enddiagram\] 
We will prove the proposition by showing that 
$\cxix^{\partial L_{1}}\simeq\cxix^{\partial L_{2}}$. 

It is useful to first consider $\cxx^{\partial L_{1}}$ and 
$\cxx^{\partial L_{2}}$ where we have to more explicitly keep track of 
coordinates. Suppose that the vertex set of $K$ is $\{1,\ldots,m\}$. 
Stacking introduces one additional vertex in $L_{t}$ which we label 
in both cases as $m+1$. Suppose that $\sigma_{t}$ is on the vertex set 
$\{i_{t,1},\ldots,i_{t,d}\}$. Let $\{j_{t,1},\ldots,j_{t,m-d}\}$ be the 
complement of $\{i_{t,1},\ldots,i_{t,d}\}$ in $\{1,\ldots,m\}$. Observe 
that $\Delta^{d}_{t}$ is on the vertex set $\{i_{t,1},\ldots,i_{t,d},m+1\}$. 
Regarding each of $\sigma_{t}$, $\Delta^{d}_{t}$, $K$ 
and $L_{t}$ as being on the vertex set $\{1,\ldots,m+1\}$, we can take 
polyhedral products to obtain pushouts 
\begin{equation} 
  \label{stackdgrm1} 
  \diagram 
       \cxx^{\overline{\sigma_{t}}}\rto\dto & \cxx^{\overline{\partial\Delta^{d}_{t}}}\dto \\ 
       \cxx^{\overline{\partial K}}\rto & \cxx^{\partial L_{t}}. 
  \enddiagram 
\end{equation} 

In general, if $\tau$ is a $d$-simplex on the vertex set \mbox{$\{1,\ldots,d+1\}$}, 
then by definition of the polyhedral product we have 
$\cxx^{\tau}=\prod_{r=1}^{d+1} CX_{r}$, 
and 
$\cxx^{\partial\tau}=\bigcup_{r=1}^{d+1}(CX_{1}\times\cdots\times X_{r}\times 
     \cdots\times CX_{d+1})$, 
where, in each term of the union, all the factors are cones except for one. 
Applying this to our case, we obtain  
\begin{align*} 
     \cxx^{\overline{\sigma_{t}}} & =\cxx^{\sigma_{t}}\times\prod_{s=d+1}^{m} X_{j_{t,s}} 
          \times X_{m+1}= 
          \prod_{s=1}^{d} CX_{i_{t,s}}\times\prod_{s=d+1}^{m} X_{j_{t,s}}\times X_{m+1} \\ 
     \cxx^{\overline{\partial\Delta^{d}_{t}}} & 
           = \cxx^{\partial\Delta^{d}_{t}}\times\prod_{s=d+1}^{m} X_{j_{t,s}}= 
           \bigcup_{s=1}^{d+1}(CX_{i_{t,1}}\times\cdots\times X_{i_{t,s}}\times\cdots 
           \times CX_{i_{t,d+1}})\times\prod_{s=d+1}^{m} X_{j_{t,s}} \\ 
     \cxx^{\overline{K}} & =\cxx^{K}\times X_{m+1}  
\end{align*} 
where, in the second line, to compress notation we have used $i_{t,d+1}$ to 
refer to the vertex $m+1$ for both $t=1,2$. 
Further, under these identifications, the map 
\(\namedright{\cxx^{\overline{\sigma_{t}}}}{}{\cxx^{\overline{\partial K}}}\) 
is the product of the identity map on $X_{m+1}$ and a map 
\(f_{t}\colon\namedright{\cxx^{\sigma_{t}}\times\prod_{s=d+1}^{m} X_{j_{t,s}}}{} 
         {\cxx^{\partial K}}\) 
induced by the inclusion of the face 
\(\namedright{\sigma_{t}}{}{\partial K}\), 
and the map 
\(\namedright{\cxx^{\overline{\sigma_{t}}}}{}{\cxx^{\overline{\partial\Delta^{d}}}}\) 
is a coordinate-wise inclusion which we label as $i$. 
Thus~(\ref{stackdgrm1}) can be identified with the pushouts 
\begin{equation} 
  \label{stackdgrm2} 
  \diagram 
       \bigg(\prod_{s=1}^{d} CX_{i_{t,s}}\times\prod_{s=d+1}^{m} X_{j_{t,s}}\bigg) 
                 \times X_{m+1}\rto^-{i}\dto^{f_{t}\times 1} 
           & \bigcup_{s=1}^{d+1}(CX_{i_{t,1}}\times\cdots\times X_{i_{t,s}}\times 
                       \cdots\times CX_{i_{t,d+1}})\dto \\ 
       \cxx^{\partial K}\times X_{m+1}\rto & \cxx^{\partial L_{t}}. 
  \enddiagram 
\end{equation} 
Now simplifying to the case of $\cxix$ when each coordinate space $X_{i}$ 
equals a common space $X$, we obtain pushouts 
\begin{equation} 
  \label{stackdgrm3} 
  \diagram 
       \bigg(\prod_{s=1}^{d} CX\times\prod_{s=d+1}^{m} X\bigg) 
              \times X\rto^-{i}\dto^{f_{t}\times 1} 
           & \bigcup_{s=1}^{d+1}(CX\times\cdots\times X\times\cdots\times CX)\dto \\ 
       \cxix^{\partial K}\times X\rto & \cxix^{\partial L_{t}}. 
  \enddiagram 
\end{equation}       

Observe that the only difference in the pushouts for $\cxix^{\partial L_{1}}$ and 
$\cxix^{\partial L_{2}}$ in~(\ref{stackdgrm3}) are the maps $f_{1}$ and $f_{2}$. 
We will show that there is a self-homotopy equivalence $e$ of $\cxix^{\partial K}$ 
which satisfies a homotopy commutative square 
\begin{equation} 
  \label{stackdgrm4} 
  \diagram 
         \prod_{s=1}^{d} CX\times\prod_{s=d+1}^{m} X\rto^-{p}\dto^{f_{1}} 
              & \prod_{s=1}^{d} CX\times\prod_{s=d+1}^{m} X\dto^{f_{2}} \\ 
         \cxix^{\partial K}\rto^-{e} & \cxix^{\partial K} 
  \enddiagram 
\end{equation} 
where $p$ permutes coordinates. Granting this, observe that we obtain 
a map from the $t=1$ pushout in~(\ref{stackdgrm3}) to the $t=2$ pushout 
by using $p\times 1$ on the upper left corner, $e\times 1$ on the lower 
right corner, and noting that $i$ is a coordinate-wise inclusion, we can 
also use $p\times 1$ on the upper right corner. This induces a map 
of pushouts 
\(h\colon\namedright{\cxix^{\partial L_{1}}}{}{\cxix^{\partial L_{2}}}\). 
As~$p$ and $e$ are homotopy equivalences, so is $h$, and this completes 
the proof. 

It remains to construct the self-homotopy equivalence $e$  
of $\cxix^{\partial K}$. First consider the simple polytope $P$ that is dual to $K$. 
Let $v_{1}$ and $v_{2}$ be vertices of $P$. Consider the permutation that 
interchanges $v_{1}$ and $v_{2}$ while leaving the other vertices fixed. 
Since the polytope $P$ is simple, this permutation induces a self-map   
of the face poset of $P$ which permutes the $k$-dimensional faces 
for each $0\leq k\leq d$. Dually, the face poset for $K$ is obtained by 
reversing the arrows on the face poset for $P$, so we obtain a self-map 
of the face poset of $K$ which permutes the $k$-dimensional faces 
for each $0\leq k\leq d$ . Consequently, if we let $v_{1}$ and $v_{2}$ 
be the vertices of $P$ that are dual to the facets $\sigma_{1}$ and 
$\sigma_{2}$ of $K$, we obtain a map 
\(g^{\prime}\colon\namedright{K}{}{K}\) 
of simplicial complexes which permutes the facets $\sigma_{1}$ 
and~$\sigma_{2}$. This induces a map 
\(g\colon\namedright{\partial K}{}{\partial K}\) 
of simplicial complexes which permutes the faces $\sigma_{1}$ 
and $\sigma_{2}$. Now apply the polyhedral product $\cxix$ to the 
face poset of $K$. Any face $\tau$ of $K$ has $\cxix^{\tau}$ equal 
to a product of copies of $CX$ or $X$, depending on whether a 
vertex is in or not in $\tau$. So the self-map of the face poset of $K$ 
induces a self-map of $\cxx^{\tau}$ for each face $\tau$ of $K$ 
which permutes the $CX$ factors and permutes the $X$ factors. 
Any such permutation is a homotopy equivalence. The morphism 
of face posets ensures that these permuations are compatible 
under face-wise inclusions, so there are induced maps 
\(e^{\prime}\colon\namedright{\cxix^{K}}{}{\cxix^{K}}\) 
and 
\(e\colon\namedright{\cxix^{\partial K}}{}{\cxix^{\partial K}}\) 
which are homotopy equivalences, and $e$ satisfies~(\ref{stackdgrm4}). 
\end{proof} 

Starting with a simplicial complex $K$ of dimension $d$, there are many 
ways of iteratively stacking to produce a new simplicial complex $L$. 
A particular sequence of stacks is called a \emph{stack history} of~$L$. 

\begin{corollary} 
   \label{stackcor} 
   Let $K$ be a simplicial complex of dimension $d$ which is dual 
   to a simple polytope~$P$ and let $L$ be a simplicial complex obtained 
   from $K$ by iterated stacking operations. Then the homotopy type 
   of $\cxix^{\partial L}$ is independent of the stack history of $L$.~$\qqed$ 
\end{corollary}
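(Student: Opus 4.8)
The plan is to prove Corollary~\ref{stackcor} by induction on the number of stacking operations used to obtain $L$ from $K$, using Proposition~\ref{stackindep} as the single-step engine. The base case (zero stacks) is trivial, since then $L=K$ and there is nothing to prove. For the inductive step, suppose the result holds whenever the stack history has length at most $n-1$, and let $L$ be obtained from $K$ by a stack history of length $n$. The key point to isolate is that any two stack histories of length $n$ producing $L$ can be connected by a sequence of "adjacent transpositions" of consecutive stacks, together with the single-step independence of Proposition~\ref{stackindep}.

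More precisely, I would argue as follows. Let $S$ and $S'$ be two stack histories of $L$, each a sequence of $n$ stacking operations. Write $S = (\tau_1,\ldots,\tau_n)$ where $\tau_i$ records the facet along which the $i$th $d$-simplex is glued, producing intermediate complexes $K=L_0\subseteq L_1\subseteq\cdots\subseteq L_n=L$. The first observation is a commutation fact: if two consecutive stacks in a history are performed on facets that are "independent" (the facet stacked at step $i$ is unaffected by — i.e.\ still a facet after — the stack at step $i+1$, and vice versa), then swapping their order produces the same simplicial complex $L_{i+1}$ after both are done, hence the same final $L$. Since a stack along a facet $\sigma$ only subdivides the faces contained in the new $d$-simplex on $\sigma$ and leaves all other facets intact, two stacks commute in this sense precisely when they are performed on distinct facets of the same intermediate complex, which is automatic whenever one does not produce the facet used by the other. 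Any permutation of the order of stacks that yields the same $L$ can then be realized by a sequence of such adjacent commutations.

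The reduction to Proposition~\ref{stackindep} is then immediate: combining the first $n-1$ stacks via the inductive hypothesis identifies $\cxix^{\partial L_{n-1}}$ up to homotopy independently of their order, and Proposition~\ref{stackindep} — applied with $K$ replaced by $L_{n-1}$, which is a simplicial complex of dimension $d$ dual to a simple polytope (the vertex-cut dual construction preserves simplicity and dimension, as noted after the definitions) — shows that the final stack may be performed along any facet of $L_{n-1}$ without changing the homotopy type of $\cxix^{\partial L}$. Chaining these homotopy equivalences along a sequence of adjacent transpositions connecting $S$ to $S'$ gives $\cxix^{\partial L}\simeq\cxix^{\partial L}$ independent of history.

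The main obstacle I anticipate is not the homotopy-theoretic content — that is entirely carried by Proposition~\ref{stackindep} — but the purely combinatorial bookkeeping needed to make "any two stack histories of $L$ are connected by adjacent transpositions of commuting stacks" precise. One must verify that a stack along a facet $\sigma$ of an intermediate complex keeps every other facet of that complex a facet of the result, so that stacks performed on different facets genuinely commute, and that every reordering yielding the same $L$ decomposes into such adjacent swaps. This is a standard but slightly fiddly argument about the local nature of the stacking operation; since the corollary is stated with a bare \qed in the excerpt, the authors evidently regard it as a routine consequence, and I would keep the combinatorial verification brief, emphasizing only that stacks on distinct facets are order-independent and that Proposition~\ref{stackindep} handles the one place where the facets used by consecutive stacks genuinely differ.
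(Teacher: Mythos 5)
Your framing trivializes the statement in one direction and leaves its actual content unproved in the other. You fix the complex $L$ and compare two histories $S,S'$ that both produce $L$; under that reading the conclusion $\cxix^{\partial L}\simeq\cxix^{\partial L}$ is a tautology (your own final sentence says as much), and the adjacent-transposition machinery is beside the point. The content of the corollary --- and the way it is used in Section~\ref{sec:deletion}, where an \emph{arbitrary} stacked polytope with $\ell$ stacks is replaced by the specific complex $\LL$ --- is that two length-$n$ histories which choose \emph{different} facets produce, in general, combinatorially non-isomorphic complexes $L\not\cong L'$ whose polyhedral products are nevertheless homotopy equivalent. Already for $d=2$ and four stacks, a ``path'' of four triangles and a ``star'' of three triangles glued to the edges of a central one have different vertex degree sequences, hence are non-isomorphic; their histories are not related by any reordering, so commutation of stacks cannot connect them. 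The only tool that changes \emph{which} facet is stacked is Proposition~\ref{stackindep}, and that is where the whole weight of the argument must sit.

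The paper's intended proof is indeed just to iterate Proposition~\ref{stackindep} by induction on the number of stacks, and your inductive step attempts this, but as written it has a gap at the one place where something must be checked. The inductive hypothesis hands you an abstract homotopy equivalence $\cxix^{\partial L_{n-1}}\simeq\cxix^{\partial L'_{n-1}}$ between the boundaries of two \emph{different} $(n-1)$-stack complexes, whereas Proposition~\ref{stackindep} only compares two stacks performed on the \emph{same} base complex. To conclude $\cxix^{\partial(L_{n-1}\cup_{\sigma}\Delta^{d})}\simeq\cxix^{\partial(L'_{n-1}\cup_{\sigma'}\Delta^{d})}$ you need the equivalence supplied by induction to be compatible with the pushout~(\ref{stackdgrm3}) attaching the last simplex, i.e.\ to carry the facet inclusion $f_{\sigma}$ to $f_{\sigma'}$ up to coordinate permutation as in~(\ref{stackdgrm4}); a bare homotopy equivalence of spaces does not give this. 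The proof of Proposition~\ref{stackindep} in fact \emph{constructs} such a compatible equivalence for a single step, so the repair is to strengthen the inductive statement to carry that extra data (an equivalence of pairs matching up all facet inclusions) through the induction, rather than to search for a combinatorial normal form of histories.
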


\section{Deleting a vertex from the boundary of a stacked polytope} 
\label{sec:deletion} 

In this section we consider a special case of iterated stacking operations. 
Let $P=\Delta^{d}$ be the $d$-simplex. Then $P$ is a simple polytope, 
and the dual $K=P^{\ast}$ of $P$ is again $\Delta^{d}$. In this case, 
if $L$ is obtained from $K$ by a sequence of stacking operations, then 
$L$ is also a simple polytope of dimension~$d$, as well as a simplicial 
complex. The simple polytope $L$ is called a \emph{stacked polytope}. 
Each copy of $\Delta^{d}$ in $L$ is called a \emph{stack}, so if $L$ is 
formed by $\ell -1$ stacking operations, then it has $\ell$ stacks. 

Suppose that $L$ is a stacked polytope with $\ell$ stacks. So there is 
a sequence of stacked polytopes 
\[L_{1}=\Delta^{d}\subseteq L_{2}\subseteq\cdots\subseteq L_{\ell}=L\] 
where, for $2\leq i\leq\ell$, $L_{i}$ has been formed by gluing a $\Delta^{d}$ 
to $L_{i-1}$ along a common facet. By Corollary~\ref{stackcor}, the homotopy 
type of $\cxix^{\partial L}$ is independent of the stack history of $L$. 
Thus we can choose a stacking order which is more convenient for 
analyzing $\cxix^{\partial L}$. 
 
The prescribed stacking order we choose is as follows. Let 
$\LL_{1}=\Delta^{d}$. Label the vertices of $\Delta^{d}$ as 
$\{1,\ldots,d+1\}$. Form $\LL_{2}$ by stacking a copy of $\Delta^{d}$ 
to $\LL_{1}$ on the facet $(1,\ldots,d)$. Label the one extra vertex 
of $\LL_{2}$ as $d+2$, and notice that if $d=1$ then the vertex $\{3\}$ 
is a facet of $\LL_{2}$ and if $d>1$ then $(1,\ldots,d-1,d+2)$ is a 
facet of $\LL_{2}$. Now stack onto this facet and iterate the procedure. 
We obtain, for $2< k\leq\ell$, a stacked polytope $\LL_{k-1}$ on the 
vertex set $\{1,\ldots,d+k-1\}$ where if $d=1$ then the vertex 
$\{k=d+k-1\}$ is a fact and if $d>1$ then $(1,\ldots,d-1,d+k-1)$ is 
a facet. Form $\LL_{k}$ by stacking a copy of $\Delta^{d}$ on this facet. 
Label the one extra vertex of $\LL_{k}$ as $d+k$, and observe that 
if $d=1$ then the vertex $\{d+k\}$ is a facet of $\LL_{k}$ and if $d>1$ then 
$(1,\ldots,d-1,d+k)$ is a facet of~$\LL_{k}$. Finally, let $\LL=\LL_{\ell}$. 

Now we identify the simplicial complex obtained by deleting the 
vertex $\{1\}$ from $\partial\LL$. 

\begin{lemma} 
   \label{delete} 
   The simplicial complex $\LL=\LL_{\ell}$ has the following properties:  
   \begin{letterlist} 
      \item $\LL$ has $\ell d+1$ facets; 
      \item there are $\ell$ facets in part~(a) which do not contain the vertex $\{1\}$: 
                these are $(2,3,\ldots,d,d+1)$, $(2,3,\ldots,d,d+2)$, and for 
                $2< k\leq\ell$, $(2,3,\ldots,d-1,d+k-1,d+k)$; 
      \item the simplicial complex $\partial\LL-\{1\}$ filters as a sequence 
                \[M_{1}=\Delta^{d-1}\subseteq M_{2}\subseteq\cdots\subseteq 
                        M_{\ell}=\partial L-\{1\}\] 
                where, for $2\leq k\leq\ell$, $M_{k}=M_{k-1}\cup_{\sigma_{k}}\Delta^{d-1}$, 
                where $\sigma_{k}=\Delta^{d-2}$. 
   \end{letterlist} 
\end{lemma}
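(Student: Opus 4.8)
The plan is to prove all three parts simultaneously by induction on $\ell$, following the prescribed stacking sequence $\LL_{1}=\Delta^{d}\subseteq\LL_{2}\subseteq\cdots\subseteq\LL_{\ell}=\LL$. Write each step as $\LL_{k}=\LL_{k-1}\cup_{\tau_{k}}\Delta^{d}_{k}$, where $\Delta^{d}_{k}$ is the $d$-simplex on the vertex set $V_{k}$ obtained by adjoining the new vertex $d+k$ to the glued facet $\tau_{k}$; by the construction, $\tau_{2}=(1,\ldots,d)$ and $\tau_{k}=(1,\ldots,d-1,d+k-1)$ for $k\geq 3$, so in every case $\{1\}\in\tau_{k}\subseteq V_{k}$. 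The argument rests on three elementary observations about a single stacking step: (i) every face of $\LL_{k-1}$ survives in $\LL_{k}$, and the only new $(d-1)$-faces of $\LL_{k}$ are the facets $V_{k}\setminus\{v\}$ of $\Delta^{d}_{k}$ for $v\in\tau_{k}$, each of which is genuinely new because it contains the vertex $d+k$; (ii) $\LL_{k-1}\cap\Delta^{d}_{k}$ is exactly the full simplex on $\tau_{k}$, again because the vertex $d+k$ does not occur in $\LL_{k-1}$; and (iii) the glued facet $\tau_{k}$ contains the vertex $\{1\}$, for every $k$.

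Part~(a) is then immediate: $\LL_{1}=\Delta^{d}$ has $d+1$ facets, and by~(i) each of the $\ell-1$ stacking operations adds exactly $d$ new $(d-1)$-faces, so $\LL$ has $(d+1)+(\ell-1)d=\ell d+1$ facets. For part~(b), observation~(iii) shows that among the $d$ new facets $V_{k}\setminus\{v\}$, $v\in\tau_{k}$, produced at stage $k\geq 2$, exactly one avoids the vertex $\{1\}$, namely $\rho_{k}:=V_{k}\setminus\{1\}$; combined with~(i), this gives that the facets of $\LL_{k}$ not containing $\{1\}$ are those of $\LL_{k-1}$ not containing $\{1\}$, together with $\rho_{k}$. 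Since the only facet of $\LL_{1}$ avoiding $\{1\}$ is $\rho_{1}=(2,\ldots,d,d+1)$, reading off $V_{k}\setminus\{1\}$ along the sequence produces exactly the list in~(b): $\rho_{2}=(2,\ldots,d,d+2)$, and $\rho_{k}=(2,\ldots,d-1,d+k-1,d+k)$ for $2<k\leq\ell$. These $\ell$ facets are distinct, since their largest vertices $d+1,d+2,\ldots,d+\ell$ are distinct.

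For part~(c), the key point is that deleting the vertex $\{1\}$ --- that is, passing to the full subcomplex on $\{2,\ldots,d+k\}$ --- commutes with the decomposition $\LL_{k}=\LL_{k-1}\cup\Delta^{d}_{k}$. Intersecting each term with that full subcomplex and using~(ii) gives
\[\LL_{k}-\{1\}\;=\;\big(\LL_{k-1}-\{1\}\big)\cup_{\sigma_{k}}\big(\Delta^{d}_{k}-\{1\}\big),\qquad\sigma_{k}:=\tau_{k}\setminus\{1\}.\]
Here $\Delta^{d}_{k}-\{1\}$ is the full simplex on $V_{k}\setminus\{1\}=\rho_{k}$, hence a copy of $\Delta^{d-1}$, and $\sigma_{k}$ is the full simplex on $\tau_{k}\setminus\{1\}$, hence a copy of $\Delta^{d-2}$; by~(i), $\sigma_{k}$ is a face of $\LL_{k-1}-\{1\}$. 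The base case is $\LL_{1}-\{1\}=\Delta^{d}-\{1\}$, the full simplex on $(2,\ldots,d+1)$, i.e.\ $\Delta^{d-1}$. Setting $M_{k}:=\LL_{k}-\{1\}$, this is exactly the filtration asserted in~(c). It remains only to identify $\LL-\{1\}$ with $\partial\LL-\{1\}$.

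This last identification is the one step that requires a little care, and is where observation~(iii) is really used. Since $\partial\LL$ is a subcomplex of $\LL$, it suffices to show that every face of $\LL$ not lying on $\partial\LL$ --- that is, every $d$-face and every interior $(d-1)$-face of $\LL$ --- contains the vertex $\{1\}$. The $d$-faces of $\LL$ are precisely the stacks $\Delta^{d}_{1},\ldots,\Delta^{d}_{\ell}$, and each contains $\{1\}$ because $\Delta^{d}_{1}=\Delta^{d}$ is on the vertex set $\{1,\ldots,d+1\}$ and $V_{k}=\tau_{k}\cup\{d+k\}$ with $\{1\}\in\tau_{k}$; and the interior $(d-1)$-faces of a stacked polytope are exactly the glued facets $\tau_{2},\ldots,\tau_{\ell}$ (a short induction using~(i), and the fact that each $\tau_{k}$ is a boundary facet of $\LL_{k-1}$), which contain $\{1\}$ by~(iii). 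Hence $\partial\LL-\{1\}=\LL-\{1\}=M_{\ell}$, and the proof is complete. Everything past the initial setup is routine bookkeeping along the induction; note that part~(c) is precisely the input needed to apply Proposition~\ref{htype} (taking the parameter $k$ there to be $d-1$) on the way to Theorem~\ref{Panovsoln}(a).
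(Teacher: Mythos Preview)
Your proof is correct and follows essentially the same inductive bookkeeping as the paper, tracking the $(d-1)$-faces along the prescribed stacking sequence for parts~(a) and~(b). For part~(c) you set $M_{k}=\LL_{k}-\{1\}$ and then identify $\LL-\{1\}$ with $\partial\LL-\{1\}$, whereas the paper assembles $\partial\LL-\{1\}$ directly by gluing the facets listed in part~(b) one at a time; these produce the same filtration, and your closing assertion that the only simplices of $\LL$ outside $\partial\LL$ are the $d$-stacks and the glued $\tau_{k}$'s is the standard fact for stacked polytopes (and could be bypassed by simply noting each $\rho_{k}$ already lies in $\partial\LL$).
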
 

\begin{proof} 
For part~(a), observe that $\LL_{1}=\Delta^{d}$ has $d+1$ facets. As $\LL_{\ell}$ is formed 
by gluing on $(\ell -1)$ more $\Delta^{d}$'s, the total of $\ell$ copies of $\Delta^{d}$  
have $\ell(d+1)$ facets. But each gluing occurs along a common facet, 
so at each of the $(\ell -1)$ gluings $1$ facet is removed. Thus $\LL_{\ell}$ 
has $\ell(d+1)-(\ell-1)=\ell d+1$ facets. 

For part~(b), observe that in $\LL_{1}=\Delta^{d}$ there are $d+1$ facets but 
only one of them, $(2,3,\ldots,d,d+1)$, does not contain the vertex $\{1\}$. 
In forming $\LL_{2}$, we stack on the facet $(1,2,\ldots,d)$ of $\LL_{1}$, and label 
the extra vertex $d+2$. This operation removes $(1,2,\ldots,d)$ as a facet 
of $\LL_{1}$ and introduces $d$ new facets: all $d+1$ facets of $\Delta^{d}$ on 
the vertex set $\{1,2,\ldots,d,d+2\}$ except for $(1,2,\ldots,d)$. Of the new facets, 
only one of them, $(2,3,\ldots,d,d+2)$, does not contain the vertex $\{1\}$. 
Iterating, for $2<k\leq\ell$, in forming $\LL_{k}$, we stack on the facet 
$(1,2,\ldots,d-1,d+k-1)$ of $\LL_{k-1}$, and label the extra vertex $d+k$. 
This operation removes $(1,2,\ldots,d-1,d+k-1)$ as a facet of $\LL_{k}$ 
and introduces $d$ new facets: all $d+1$ facets of $\Delta^{d}$ on the vertex 
set $\{1,2,\ldots,d-1,d+k-1,d+k\}$ except for $(1,2,\ldots,d-1,d+k-1)$. 
Of the new facets, only one of them, $(2,3,\ldots,d-1,d+k-1,d+k)$, 
does not contain the vertex $\{1\}$. Thus, precisely $\ell$ of the 
$\ell d+1$ total facets of $\LL_{\ell}$ do not contain the 
vertex $\{1\}$, and these are: $(2,3,\ldots,d,d+1)$, $(2,3,\ldots,d,d+2)$, 
and for $2< k\leq\ell$, $(2,3,\ldots,d-1,d+k-1,d+k)$. 

For part~(c), since $\LL$ is a simple polytope which is also a 
simplicial complex, the geometric realization of $\partial\LL$  
can be obtained by gluing together the facets of $\LL$. The 
geometric realization of the simplicial complex $\partial\LL-\{1\}$ 
is therefore obtained by gluing together those facets of $\LL$ 
which do not contain the vertex $\{1\}$. We perform this gluing 
procedure one simplex at a time. Let $M_{1}=\Delta^{d-1}$ 
be $(2,3,\ldots,d,d+1)$. Form $M_{2}$ by gluing the $(d-1)$-simplex 
$(2,3,\ldots,d,d+2)$ to $M_{1}$ along the common $(d-2)$-simplex 
$(2,3,\ldots,d)$. For $2\leq k\leq\ell$, form $M_{k}$ by gluing the 
$(d-1)$-simplex $(2,3,\ldots,d-1,d+k-1,d+k)$ to $M_{k-1}$ along 
the common $(d-2)$-simplex $(2,3,\ldots,d-1,d+k-1)$. Then   
$M_{\ell}=\partial\LL-\{1\}$. 
\end{proof} 

Applying Proposition~\ref{htype} to Lemma~\ref{delete}~(c), we immediately 
obtain the following. 

\begin{proposition} 
   \label{deletiontype} 
   There is a homotopy equivalence 
   \[\cxix^{\partial\LL-\{1\}}\simeq\cxix^{P_{\ell}}\] 
   where $P_{\ell}$ is $\ell$ disjoint points.~$\qqed$ 
\end{proposition}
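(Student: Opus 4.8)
The plan is to deduce Proposition~\ref{deletiontype} directly by invoking the machinery already assembled in Sections~\ref{sec:prelim} and~\ref{sec:deletion}. The key observation is that Lemma~\ref{delete}~(c) exhibits $\partial\LL-\{1\}$ as a simplicial complex built by exactly the iterated-gluing procedure described in the hypothesis of Proposition~\ref{htype}: there is a filtration
\[
M_{1}=\Delta^{d-1}\subseteq M_{2}\subseteq\cdots\subseteq M_{\ell}=\partial\LL-\{1\}
\]
in which each $M_{k}$ is obtained from $M_{k-1}$ by gluing on a copy of $\Delta^{d-1}$ along a common face $\sigma_{k}=\Delta^{d-2}$. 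So the first step is simply to set $k=d-1$ in Proposition~\ref{htype} (noting $d-1\geq 0$; if $d=1$ we are in the tautological $k=0$ case of the Remark, and if $d\geq 2$ the $k\geq 1$ hypothesis of the Proposition is met) and match notation: the Proposition's $K_{i}$ is our $M_{i}$, the Proposition's $K=K_{\ell}$ is our $\partial\LL-\{1\}$, and the Proposition's ambient vertex count $k+\ell=(d-1)+\ell$ matches the vertex set of $\partial\LL-\{1\}$ recorded implicitly in Lemma~\ref{delete}.

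The second step is to read off the conclusion: Proposition~\ref{htype} gives a homotopy equivalence $\cxix^{\partial\LL-\{1\}}\simeq\cxix^{P_{\ell}}$, which is precisely the statement. There is essentially nothing further to prove; the word ``immediately'' in the sentence preceding the Proposition signals that this is intended as a direct corollary, and the \qqed\ already appended to the statement confirms no separate proof block is expected.

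The only point requiring any care — and the mild obstacle — is checking that the hypotheses of Proposition~\ref{htype} are genuinely satisfied by the $M_{k}$, rather than merely superficially resembling them. Specifically, Proposition~\ref{htype} requires each $K_{i}$ for $i>1$ to arise as $K_{i-1}\cup_{\sigma_{i}}\Delta^{k}$ with $\sigma_{i}$ a \emph{common face} $\Delta^{k-1}$ of both pieces, so that Lemma~\ref{glue} applies at each stage (with its conditions (i)--(iii) on vertex sets). Lemma~\ref{delete}~(c) asserts exactly this structure, with the gluing faces $\sigma_{k}=(2,3,\ldots,d-1,d+k-1)$ for $2<k\leq\ell$ and $\sigma_{2}=(2,3,\ldots,d)$, each a $(d-2)$-simplex. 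Since the vertices $d+k$ introduced at stage $k$ are distinct and none equals $\{1\}$, the indexing and the hypothesis ``$\{i\}\in K_{1}$ for each vertex $i$'' used inside the proof of Lemma~\ref{glue} hold. With this verification in hand, Proposition~\ref{htype} applies verbatim and the result follows.
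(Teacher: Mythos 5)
Your proposal is correct and is exactly the paper's argument: the paper's entire proof is the single sentence ``Applying Proposition~\ref{htype} to Lemma~\ref{delete}~(c), we immediately obtain the following,'' i.e.\ substituting $k=d-1$ and $K_{i}=M_{i}$ into Proposition~\ref{htype}. Your additional check that the hypotheses of Proposition~\ref{htype} (and the $d=1$ edge case via the Remark) are genuinely met is a reasonable elaboration of the same route, not a different one.
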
 

Now specialize to polyhedral products on the pairs $(D^{2},S^{1})$ 
and write $\zk$ for $\cxix^{K}$. In~\cite{GT1} the homotopy type of 
$\mathcal{Z}_{P_{\ell}}$ was identified, giving the following. 

\begin{corollary} 
   \label{deletioncor} 
   There is a homotopy equivalence 
   \[\mathcal{Z}_{\partial\LL-\{1\}}\simeq\mathcal{Z}_{P_{\ell}}\simeq  
         \bigvee_{k=3}^{\ell+1}  (S^{k})^{\wedge (k-2)\binom{\ell}{k-1}}.\] 
   $\qqed$ 
\end{corollary}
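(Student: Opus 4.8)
The final statement to prove is Corollary~\ref{deletioncor}, which says that
\[\mathcal{Z}_{\partial\LL-\{1\}}\simeq\mathcal{Z}_{P_{\ell}}\simeq\bigvee_{k=3}^{\ell+1}(S^{k})^{\wedge(k-2)\binom{\ell}{k-1}}.\]

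The plan is straightforward because most of the work has already been done. The first homotopy equivalence $\mathcal{Z}_{\partial\LL-\{1\}}\simeq\mathcal{Z}_{P_{\ell}}$ is just Proposition~\ref{deletiontype} specialized to the case where every coordinate pair $(X_i,A_i)$ equals $(D^2,S^1)$; in that case $\cxix^K$ is precisely $\zk$ by definition, so the equivalence $\cxix^{\partial\LL-\{1\}}\simeq\cxix^{P_\ell}$ becomes $\mathcal{Z}_{\partial\LL-\{1\}}\simeq\mathcal{Z}_{P_\ell}$ with no further argument. So the only remaining content is the second equivalence, $\mathcal{Z}_{P_\ell}\simeq\bigvee_{k=3}^{\ell+1}(S^k)^{\wedge(k-2)\binom{\ell}{k-1}}$, and this is exactly the computation of the homotopy type of the moment-angle complex of $\ell$ disjoint points carried out in~\cite{GT1}.

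First I would note that $\mathcal{Z}_{P_\ell}$ can be built up inductively using the specialization of~(\ref{disjptscxix}) to $(D^2,S^1)$: there is a homotopy equivalence
\[\mathcal{Z}_{P_{m}}\simeq\Bigl(\bigl(\textstyle\prod_{i=1}^{m-1}D^{2}\bigr)\ast S^{1}\Bigr)\vee\bigl(\mathcal{Z}_{P_{m-1}}\rtimes S^{1}\bigr),\]
and one could in principle close the induction by hand using the identities $A\ast S^1\simeq\Sigma A\wedge S^1$, the contractibility of a product of disks, and the fact that smashing and right-half-smashing distribute over wedges and take spheres to spheres. However, since this precise statement is recorded in~\cite{GT1}, the cleanest route is simply to cite it: the homotopy type of $\mathcal{Z}_{P_\ell}$ was determined there and is the wedge of smash products of spheres displayed above, with the indicated multiplicities $(k-2)\binom{\ell}{k-1}$. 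Composing this with the first equivalence yields the corollary.

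There is essentially no obstacle here: the corollary is a two-line deduction combining Proposition~\ref{deletiontype} (which supplies the left equivalence once one substitutes the pair $(D^2,S^1)$ and recognizes $\zk$) with the known computation from~\cite{GT1} (which supplies the right equivalence). If one wanted a self-contained argument for the second equivalence rather than a citation, the only mildly delicate point would be bookkeeping the binomial coefficients $(k-2)\binom{\ell}{k-1}$ through the inductive wedge decomposition — tracking how each stage contributes one new family of smash-product summands and how the old summands get suspended by the right-half-smash with $S^1$ — but this is routine and is precisely what~\cite{GT1} already carries out, so I would not reproduce it.
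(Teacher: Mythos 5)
Your proposal is correct and matches the paper's argument exactly: the corollary is obtained by specializing Proposition~\ref{deletiontype} to the pair $(D^{2},S^{1})$ and then citing~\cite{GT1} for the wedge decomposition of $\mathcal{Z}_{P_{\ell}}$. Nothing further is needed.
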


\section{Cup products in $\cohlgy{\mathbb{Z}_{\partial\LL}}$} 
\label{sec:cup} 

On the one hand, since $\LL$ is a stacked polytope of dimension $d$ 
with $\ell$ stacks, it is dual to a simple polytope obtained from $\Delta^{d}$ 
by $\ell -1$ vertex cuts. So by~\cite{BM,GL} there is a diffeomorphism 
$\mathcal{Z}_{\partial\LL}\cong 
          \#_{k=3}^{\ell+1}  (S^{k}\times S^{\ell+2d-k})^{\#(k-2)\binom{\ell}{k-1}}$. 
The cup products in $\cohlgy{\mathcal{Z}_{\partial\LL}}$ are then clear 
from the description of the space as a connected sum of products of spheres. 
On the other hand, there is a combinatorial description of the cup product 
structure in $\mathcal{Z}_{K}$ for any simplicial complex $K$, proved 
in~\cite{BBP,BP1,F}. Take homology with integer coefficients. The \emph{join} 
of two simplicial complexes $K_{1}$ and $K_{2}$ is 
$K_{1}\ast K_{2}=\{\sigma_{1}\cup\sigma_{2}\mid \sigma_{i}\in K_{i}\}$. 

\begin{theorem}  
   \label{zkcohlgy}
   There is an isomorphism of graded commutative algebras
   \[\cohlgy{\zk}\cong\bigoplus_{I\subset[m]}\rcohlgy{K_{I}}.\] 
   Here, $\rcohlgy{K_{I}}$ denotes the reduced simplicial cohomology
   of the full subcomplex $K_{I}\subset K$ (the restriction of $K$
   to $I\subset[m]$). The isomorphism is the sum of isomorphisms
   \[H^p(\zk)\cong\sum_{I\subset[m]}\widetilde{H}^{p-|I|-1}(K_I)\]
   and the ring structure (the Hochster ring) is given by the maps
   \[\namedright{H^{p-|I|-1}(K_{I})\otimes H^{q-|J|-1}(K_{J})}{} 
            {H^{p+q-|I|-|J|-1}(K_{I\cup J})}\]
   which are induced by the canonical simplicial maps 
   \(\namedright{K_{I\cup J}}{}{K_{I}*K_{J}}\) 
   for $I\cap J=\emptyset$ and zero otherwise.~$\qqed$  
\end{theorem}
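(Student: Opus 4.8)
The additive part of the statement is classical: it follows from the stable splitting of the moment-angle complex, or equivalently from Hochster's decomposition of the Koszul homology that computes $\mathrm{Tor}_{\mathbb{Z}[v_{1},\ldots,v_{m}]}(\mathbb{Z}[K],\mathbb{Z})\cong\cohlgy{\zk}$. The content of the theorem is the identification of the ring structure, and the plan is to work entirely with cellular cochains. Give $D^{2}$ the $CW$-structure with one cell $e^{0},e^{1},e^{2}$ in each of dimensions $0,1,2$, so that $S^{1}=e^{0}\cup e^{1}$ is its $1$-skeleton, and give $(D^{2})^{m}$ the product structure. Then $\zk$ is the subcomplex consisting of the product cells $c_{1}\times\cdots\times c_{m}$ for which the set of coordinates $i$ with $c_{i}=e^{2}$ is a simplex of $K$; correspondingly, a basis element of $C^{*}(\zk)$ — the cochain dual to such a cell — is recorded by an ordered pair of disjoint subsets $(I,J)$ of $[m]$ with $I$ a simplex of $K$, where $I$ indexes the ``$e^{2}$'' coordinates and $J$ the ``$e^{1}$'' coordinates, and it sits in degree $2|I|+|J|$.

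First I would read off the additive splitting from the coboundary. In $C^{*}(D^{2})$ the coboundary is nonzero only on the degree-one class, which it sends (up to sign) to the degree-two class; by the Leibniz rule the coboundary on $C^{*}(\zk)$ moves a single coordinate from $J$ to $I$, dropping a term whenever the enlarged $I$ ceases to be a simplex of $K$. In particular it preserves $L:=I\sqcup J$, so $C^{*}(\zk)=\bigoplus_{L\subseteq[m]}C^{*}_{L}$, where $C^{*}_{L}$ has basis the simplices $I$ of the full subcomplex $K_{L}$, with $I$ in degree $|I|+|L|$, and differential $I\mapsto\sum\pm(I\cup\{i\})$ over $i\in L\setminus I$ for which $I\cup\{i\}$ is a simplex. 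This is precisely the augmented simplicial cochain complex of $K_{L}$ shifted up by $|L|+1$, so passing to cohomology gives $H^{p}(\zk)\cong\bigoplus_{L\subseteq[m]}\widetilde{H}^{p-|L|-1}(K_{L})$, which is the additive assertion.

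Next I would compute the cup product from a cellular diagonal approximation on $D^{2}$, tensored over the $m$ coordinates; I will arrange the diagonal so that its associated cochain product satisfies $e^{1}\cup e^{1}=0$ (legitimate since $D^{2}$ is contractible, so the diagonal $2$-chain may be taken with no $e^{1}\times e^{1}$ term). Since $C^{*}(D^{2})$ vanishes above degree two and $e^{1}\cup e^{1}=0$, the product of two positive-degree classes in $C^{*}(D^{2})$ is zero. Hence for $\alpha\in C^{*}_{L_{1}}$ and $\beta\in C^{*}_{L_{2}}$, in every coordinate of $L_{1}\cap L_{2}$ the product multiplies two positive-degree classes, so $\alpha\cup\beta=0$ unless $L_{1}\cap L_{2}=\emptyset$, matching the ``zero otherwise'' clause. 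When $L_{1}\cap L_{2}=\emptyset$, a coordinate outside $L_{1}\cup L_{2}$ never contributes $e^{2}$ (again using $e^{1}\cup e^{1}=0$), a coordinate of $L_{1}$ contributes $e^{2}$ only as $e^{2}\cup e^{0}$, and a coordinate of $L_{2}$ only as $e^{0}\cup e^{2}$; so $\alpha\cup\beta$ lies in $C^{*}_{L_{1}\cup L_{2}}$. Tracing basis elements through the identifications above, the cochain dual to a simplex $\sigma_{1}\in K_{L_{1}}$ times that dual to $\sigma_{2}\in K_{L_{2}}$ equals $\pm$ the cochain dual to $\sigma_{1}\cup\sigma_{2}$ when this is a simplex of $K_{L_{1}\cup L_{2}}$ and $0$ otherwise; this is exactly the composite of the simplicial join (cross) product $\widetilde{C}^{*}(K_{L_{1}})\otimes\widetilde{C}^{*}(K_{L_{2}})\to\widetilde{C}^{*}(K_{L_{1}}\ast K_{L_{2}})$ with the cochain map of the canonical simplicial map $\namedright{K_{L_{1}\cup L_{2}}}{}{K_{L_{1}}\ast K_{L_{2}}}$, $\tau\mapsto(\tau\cap L_{1})\cup(\tau\cap L_{2})$ (which here is the inclusion of a subcomplex). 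The degree shifts are consistent, since $(p-|L_{1}|-1)+(q-|L_{2}|-1)+1=(p+q)-|L_{1}\cup L_{2}|-1$ when $L_{1}$ and $L_{2}$ are disjoint, and this gives the ring isomorphism.

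The step I expect to be the main obstacle is the cochain-level diagonal computation: arranging $e^{1}\cup e^{1}=0$ so the product descends cleanly to $C^{*}(\zk)$, tracking the signs produced coordinate-by-coordinate, and then recognising the surviving formula as the simplicial join map on full subcomplexes. Once every coordinate has been sorted into exactly one of the roles ``in $L_{1}$ only'', ``in $L_{2}$ only'', ``in both'', or ``in neither'', the $D^{2}$-level bookkeeping is a finite check, and naturality of the cellular diagonal assembles it into the stated isomorphism of graded commutative algebras. An alternative, carried out in the cited references, runs the parallel argument on the Koszul differential graded algebra $\Lambda[u_{1},\ldots,u_{m}]\otimes\mathbb{Z}[K]$ computing $\cohlgy{\zk}$, where Hochster's multidegree decomposition gives the additive statement and the product on the Koszul complex yields the join maps.
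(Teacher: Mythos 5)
The paper does not prove Theorem~\ref{zkcohlgy} at all: it is quoted as a known result, with the $\qqed$ referring the reader to \cite{BBP,BP1,F}. Your cellular-cochain argument is precisely the proof given in those references (the multigraded splitting of $C^{*}(\zk)\subset C^{*}((D^{2})^{m})$ into shifted augmented cochain complexes of the full subcomplexes $K_{L}$, followed by the diagonal-approximation computation of the product), and it is correct in outline; the one step that genuinely needs an explicit construction rather than an appeal to contractibility of $D^{2}$ is the existence of a cellular diagonal approximation with no $e^{1}\times e^{1}$ term that is simultaneously compatible with all coordinate subcomplexes of $(D^{2})^{m}$ (so that it restricts to $\zk$), which is exactly the point you flag as the main obstacle and which Buchstaber--Panov carry out explicitly.
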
 

Theorem~\ref{zkcohlgy} implies that the Hochster ring  structure on 
$\mathcal{Z}_{\partial\LL}$ matches the ring product  structure arising 
from the geometry of the connected sum, at least up to an isomorphism. 
We need information from both, so we are led to geometrically realize 
the isomorphism, via a homotopy equivalence. 

In general, if $M$ is an $n$-dimensional manifold, let $M-\ast$ be 
$M$ with a point in the interior of the $n$-disc removed. As a 
$CW$-complex, $M-\ast$ is homotopy equivalent to the $(n-1)$-skeleton 
of $M$. By definition of the connected sum, if $M$ and $N$ are two 
$n$-dimensional manifolds then  $(M\#N)-\ast\simeq (M-\ast)\vee (N-\ast)$. 
In our case, as 
$\mathcal{Z}_{\partial\LL}= 
    \#_{k=3}^{\ell+1}  (S^{k}\times S^{\ell+2d-k})^{\#(k-2)\binom{\ell}{k-1}}$, 
the $(\ell+2d-1)$-skeleton of $\mathcal{Z}_{\partial\LL}$ is the wedge 
\[W=\bigvee_{k=3}^{\ell+1}  (\bigvee_{t=1}^{(k-1)\binom{\ell}{k-1}} 
    (S^{k}\vee S^{\ell+2d-k})).\]  
Therefore, there is one ring generator in $\cohlgy{\mathcal{Z}_{\partial\LL}}$ 
for each sphere in the wedge $W$. 

Applying Theorem~\ref{zkcohlgy} to $\mathcal{Z}_{\partial\LL}$ we obtain  
an abstract isomorphism of algebras 
\(h\colon\namedright{\cohlgy{\mathcal{Z}_{\partial\LL}}}{} 
      {\cohlgy{\mathcal{Z}_{\partial\LL}}}\), 
where on the left the generating set is given by  the Hochster ring structure, 
on the right the generating set is given by the $CW$-structure of 
$\mathcal{Z}_{\partial\LL}$, and $h$ maps generators to 
generators. Restricting to degrees less than $\ell+2d$, we obtain an 
abstract isomorphism of modules 
\(h'\colon\namedright{\cohlgy{W}}{}{\cohlgy{W}}\). 
Dualizing,  we obtain an abstract isomorphism of modules 
\(h''\colon\namedright{\hlgy{W}}{}{\hlgy{W}}\). 
Since~$W$ is a wedge of spheres, the abstract map $h''$ may be 
realized geometrically, as follows. Let $n$ be the number of spheres 
in the wedge $W$ and label the spheres from $1,\ldots, n$. For 
$1\leq i\leq n$, let 
\(j_{i}\colon\namedright{S_{n}}{}{W}\) 
be the inclusion into the wedge, and let $x_{i}\in\hlgy{W}$ be the Hurewicz 
image of $j_{i}$. Suppose that 
$h''(x_{i})=t_{i,1}x_{1}+\cdots + t_{i,n}x_{n}$ for some integers $t_{i,1},\ldots,t_{i,n}$. 
Define 
\(g_{i}\colon\namedright{S_{i}}{}{W}\) 
by $g_{i}=t_{i,1}j_{1}+\cdots t_{i,n}j_{n}$. Let 
\(g\colon\namedright{W}{}{W}\) 
be the wedge sum of the maps $g_{i}$ for $1\leq i\leq n$. Then 
$g_{\ast}=h''$. Dualizing, $g^{\ast}=h'$. As $h'$ is an isomorphism, 
by Whitehead's Theorem $g$ is a homotopy equivalence. 
 
Next, the map attaching the top cell to $W$ to form $\mathcal{Z}_{\partial\LL}$ is 
a sum of Whitehead products, one Whitehead product for each 
$S^{k}\times S^{\ell+2d-k}$. This Whitehead product is detected in 
cohomology by a nonzero cup product. Since Theorem~\ref{zkcohlgy} 
gives a ring isomorphism between the cup product structures on 
$\cohlgy{\mathcal{Z}_{\partial\LL}}$ from the connected sum and the Hochster ring, 
$g$ can be extended to a map 
\[\Gamma\colon\namedright{\mathcal{Z}_{\partial\LL}}{}{\mathcal{Z}_{\partial\LL}}\] 
which induces an isomorphism in cohomology and so is a homotopy 
equivalence. Thus we have the following. 

\begin{lemma} 
   \label{cupprods} 
   Altering 
   $\mathcal{Z}_{\partial\LL}\cong
          \#_{k=3}^{\ell+1}  (S^{k}\times S^{\ell+2d-k})^{\#(k-2)\binom{\ell}{k-1}}$ 
   by a self-homotopy equivalence if necessary, we may assume that each 
   Hochster ring generator in $\cohlgy{\mathcal{Z}_{\partial\LL}}$ is represented 
   by a map 
   \(\namedright{S^{t}}{}{\#_{k=3}^{\ell+1}  (S^{k}\times S^{\ell+2d-k})^{\#(k-2)\binom{\ell}{k-1}}}\) 
   which is the inclusion of one of the spheres in the $(\ell+2d-1)$-skeleton of 
   the connected sum.~$\qqed$ 
\end{lemma}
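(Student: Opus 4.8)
The plan is to collect the observations made just above into a single self-homotopy equivalence of $\mathcal{Z}_{\partial\LL}$. Recall that $\mathcal{Z}_{\partial\LL}$ is obtained from the wedge of spheres $W$ (its $(\ell+2d-1)$-skeleton, displayed above) by attaching a single cell of dimension $\ell+2d$, and that there is one Hochster ring generator of $\cohlgy{\mathcal{Z}_{\partial\LL}}$ for each sphere summand of $W$. First I would record the abstract isomorphism of graded algebras $h\colon\namedright{\cohlgy{\mathcal{Z}_{\partial\LL}}}{}{\cohlgy{\mathcal{Z}_{\partial\LL}}}$ obtained by applying Theorem~\ref{zkcohlgy}, which identifies the Hochster generating set on the source with the $CW$-generating set (the classes represented by the sphere inclusions into $W$) on the target; this is legitimate precisely because the degrees and multiplicities of generators in the Hochster description and in the connected-sum description agree.

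Next I would realize $h$ geometrically on the skeleton. Restricting $h$ to degrees below $\ell+2d$ gives a module isomorphism $h'\colon\namedright{\cohlgy{W}}{}{\cohlgy{W}}$, and dualizing gives $h''\colon\namedright{\hlgy{W}}{}{\hlgy{W}}$. Because $W$ is a wedge of spheres, writing $h''$ as an integer matrix in the Hurewicz basis and taking the associated matrix of degree maps between sphere summands assembles into a self-map $g\colon\namedright{W}{}{W}$ with $g_{*}=h''$, hence $g^{*}=h'$; since $h'$ is an isomorphism, $g$ is a homotopy equivalence by Whitehead's theorem. The step that I expect to carry the real content is extending $g$ across the top cell. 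The attaching map $\varphi\colon\namedright{S^{\ell+2d-1}}{}{W}$ of the top cell of $\mathcal{Z}_{\partial\LL}$ is a sum of Whitehead products of sphere inclusions, one for each factor $S^{k}\times S^{\ell+2d-k}$ of the connected sum, and each summand is detected by a nonzero cup product in $\cohlgy{\mathcal{Z}_{\partial\LL}}$. The composite $g\circ\varphi$ is again a sum of Whitehead products of sphere inclusions, and the coefficients occurring in it are exactly those describing the cup-product structure transported by $g^{*}=h'$. Since $h$ is an algebra isomorphism, these coefficients coincide with those of $\varphi$, so $g\circ\varphi\simeq\varphi$ after at most a permutation of identical sphere summands of $W$, which can be absorbed into $g$. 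Hence $g$ extends to $\Gamma\colon\namedright{\mathcal{Z}_{\partial\LL}}{}{\mathcal{Z}_{\partial\LL}}$ restricting to $g$ on $W$; as $\Gamma^{*}$ is already an isomorphism below the top degree, a Poincar\'e-duality argument shows it is degree $\pm1$ on the top cell, so $\Gamma^{*}$ is an isomorphism and $\Gamma$ is a self-homotopy equivalence.

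Finally, composing the diffeomorphism $\mathcal{Z}_{\partial\LL}\cong\#_{k=3}^{\ell+1}(S^{k}\times S^{\ell+2d-k})^{\#(k-2)\binom{\ell}{k-1}}$ with $\Gamma^{-1}$ produces an equivalent connected-sum decomposition in which, by the construction of $g$, each Hochster ring generator of $\cohlgy{\mathcal{Z}_{\partial\LL}}$ is dual to --- hence represented by the inclusion of --- one of the spheres in the $(\ell+2d-1)$-skeleton, which is the asserted form of the generators. I expect the main obstacle to be the extension step just described: one must check that the detection of Whitehead products by cup products supplied by Theorem~\ref{zkcohlgy} is sharp enough to force $g\circ\varphi$ to be genuinely homotopic to $\varphi$ (rather than merely to realize the same cup-product pairing), and that the residual ambiguity is only a permutation of identical summands of $W$; this is exactly where the coincidence of coefficients in the wedge decomposition of $\mathcal{Z}_{\partial\LL-\{1\}}$ and the connected-sum decomposition of $\mathcal{Z}_{\partial\LL}$ gets used.
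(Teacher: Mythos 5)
Your proposal is correct and follows essentially the same route as the paper: the abstract algebra isomorphism from Theorem~\ref{zkcohlgy}, its restriction and dualization to $h''$ on $\hlgy{W}$, the geometric realization $g$ on the wedge of spheres via integer combinations of sphere inclusions, Whitehead's theorem, and the extension of $g$ over the top cell using the fact that the attaching map is a sum of Whitehead products detected by cup products. Your added detail at the extension step (bilinearity of Whitehead products and the Poincar\'e-duality argument for the top-degree isomorphism) only fleshes out what the paper states more tersely.
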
 

Lemma~\ref{cupprods} lets us use combinatorial information from 
the Hochster ring to deduce cup product information for the cohomology 
of the connected sum. We apply this to deduce some cup product 
information in $\cohlgy{\mathcal{Z}_{\partial\LL}}$.  

Let $\mathcal{I}$ be an index set which runs over 
all the products of two spheres in the connected sum 
$\#_{k=3}^{\ell+1}  (S^{k}\times S^{\ell+2d-k})^{\#(k-2)\binom{\ell}{k-1}}$. 
There are $\Sigma_{k=3}^{\ell+1}  (k-1)\binom{\ell}{k-1}$ 
elements in $\mathcal{I}$. Each $\alpha\in\mathcal{I}$ corresponds 
to a product of spheres $S^{k}\times S^{\ell +2d-k}$ which  
determines a nontrivial cup product in $\cohlgy{\mathcal{Z}_{\partial\LL}}$: 
if $x_{\alpha},y_{\alpha}\in\cohlgy{\mathcal{Z}_{\partial\LL}}$ are generators 
corresponding to the inclusions of $S^{k}$ and $S^{\ell+2d-k}$ into 
the $(\ell+2d-1)$-skeleton of the connected sum, then 
$x_{\alpha}\cup y_{\alpha}\neq 0$. By Lemma~\ref{cupprods}, we 
may assume that $x_{\alpha}$ and $y_{\alpha}$ are Hochster ring 
generators. Thus $x_{\alpha}\in\rcohlgy{\partial\LL_{I_{\alpha}}}$ 
and $y_{\alpha}\in\rcohlgy{\partial\LL_{J_{\alpha}}}$ for some 
index sets $I_{\alpha}$ and $J_{\alpha}$ of $[m]$, where $m=\ell+d$ 
is the number of vertices of $\partial\LL$. 

The inclusion 
\(\namedright{\partial\LL-\{1\}}{}{\partial\LL}\) 
induces a map 
\[f\colon\namedright{\mathcal{Z}_{\partial\LL-\{1\}}}{}{\mathcal{Z}_{\partial\LL}}.\]   

\begin{lemma} 
   \label{cupseparate} 
   $\mbox{Ker}\, f^{\ast}$ contains one and only one of $x_{\alpha}$ or $y_{\alpha}$. 
\end{lemma}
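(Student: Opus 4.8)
The plan is to extract both $\mbox{Ker}\,f^{\ast}$ and the positions of $x_{\alpha}$ and $y_{\alpha}$ from the Hochster ring description of Theorem~\ref{zkcohlgy}. First I would identify $\mbox{Ker}\,f^{\ast}$. Writing $\cohlgy{\mathcal{Z}_{\partial\LL}}\cong\bigoplus_{I\subseteq[m]}\rcohlgy{\partial\LL_{I}}$ and $\cohlgy{\mathcal{Z}_{\partial\LL-\{1\}}}\cong\bigoplus_{J\subseteq[m]\setminus\{1\}}\rcohlgy{(\partial\LL-\{1\})_{J}}$, I expect $f^{\ast}$ to restrict to the identity isomorphism of $\rcohlgy{\partial\LL_{I}}$ onto the summand indexed by the same $I$ when $1\notin I$ (since then $(\partial\LL-\{1\})_{I}=\partial\LL_{I}$), and to vanish on $\rcohlgy{\partial\LL_{I}}$ when $1\in I$. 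The second point is the delicate one; I would obtain it by factoring $f$ as
\[\mathcal{Z}_{\partial\LL-\{1\}}\longrightarrow\mathcal{Z}_{\partial\LL-\{1\}}\times S^{1}=\mathcal{Z}_{\overline{\partial\LL-\{1\}}}\hookrightarrow\mathcal{Z}_{\partial\LL},\]
where $\overline{\partial\LL-\{1\}}$ denotes $\partial\LL-\{1\}$ regarded as a complex on the vertex set $[m]$: the second map respects the multigradings coming from the $T^{m}$-actions, hence the Hochster index sets, while precomposition with the first map annihilates precisely the summands whose index set contains the vertex $1$ (these being the ones detected by the extra $S^{1}$-coordinate). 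This gives $\mbox{Ker}\,f^{\ast}=\bigoplus_{I\ni 1}\rcohlgy{\partial\LL_{I}}$, so that, since $x_{\alpha}\in\rcohlgy{\partial\LL_{I_{\alpha}}}$ and $y_{\alpha}\in\rcohlgy{\partial\LL_{J_{\alpha}}}$, one has $x_{\alpha}\in\mbox{Ker}\,f^{\ast}$ exactly when $1\in I_{\alpha}$, and $y_{\alpha}\in\mbox{Ker}\,f^{\ast}$ exactly when $1\in J_{\alpha}$.

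Next I would exploit the relation $x_{\alpha}\cup y_{\alpha}\neq 0$. Since $|x_{\alpha}|=k$ and $|y_{\alpha}|=\ell+2d-k$, this class lies in $H^{\ell+2d}(\mathcal{Z}_{\partial\LL})$, the top-degree cohomology of the closed orientable $(\ell+2d)$-manifold $\mathcal{Z}_{\partial\LL}$. By the ring structure in Theorem~\ref{zkcohlgy}, $x_{\alpha}\cup y_{\alpha}$ lies in $\rcohlgy{\partial\LL_{I_{\alpha}\cup J_{\alpha}}}$ and is zero unless $I_{\alpha}\cap J_{\alpha}=\emptyset$, so its nonvanishing forces $I_{\alpha}\cap J_{\alpha}=\emptyset$. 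Moreover $\partial\LL$ is a triangulation of $S^{d-1}$, so every full subcomplex $\partial\LL_{W}$ has reduced cohomology concentrated in degrees at most $d-1$; hence the summand $\rcohlgy{\partial\LL_{W}}$ contributes to $H^{p}(\mathcal{Z}_{\partial\LL})$ only for $p\leq|W|+d$. Taking $p=\ell+2d$ and $W=I_{\alpha}\cup J_{\alpha}$, and using $|W|\leq m=\ell+d$, this forces $I_{\alpha}\cup J_{\alpha}=[m]$.

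Combining the two steps, $I_{\alpha}$ and $J_{\alpha}$ partition $[m]$, so exactly one of them contains the vertex $1$; by the kernel computation, this means exactly one of $x_{\alpha}$, $y_{\alpha}$ lies in $\mbox{Ker}\,f^{\ast}$, which is the assertion. I expect the main obstacle to be the first step — pinning down the precise form of $f^{\ast}$ on the Hochster summands, in particular its vanishing on those indexed by sets containing $1$, given that $\partial\LL-\{1\}$ has one fewer vertex than $\partial\LL$; carrying the extra $S^{1}$-factor through the decomposition, as above, is the cleanest way to handle it, after which the second step is only a degree count.
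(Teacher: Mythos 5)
Your proof is correct, and its middle step takes a genuinely different route from the paper's. Both arguments share the same frame: identify $\mbox{Ker}\,f^{\ast}$ with $\bigoplus_{1\in I}\rcohlgy{\partial\LL_{I}}$ (the paper uses this implicitly via the full-subcomplex retraction from \cite{BBCG}; your factorization through $\mathcal{Z}_{\partial\LL-\{1\}}\times S^{1}$ makes it explicit and is the standard justification), and deduce $I_{\alpha}\cap J_{\alpha}=\emptyset$ from $x_{\alpha}\cup y_{\alpha}\neq 0$. Where you diverge is in showing that $1$ lies in exactly one of the index sets. The paper proves only that $1\in I_{\alpha}\cup J_{\alpha}$, by contradiction: if not, then $x_{\alpha}$, $y_{\alpha}$ and their product would all live in $\cohlgy{\mathcal{Z}_{\partial\LL-\{1\}}}$, contradicting Corollary~\ref{deletioncor} that $\mathcal{Z}_{\partial\LL-\{1\}}$ is a wedge of spheres and hence has trivial cup products. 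You instead prove the stronger statement $I_{\alpha}\cup J_{\alpha}=[m]$ by a degree count: the product sits in top degree $\ell+2d$, and since $\partial\LL$ is $(d-1)$-dimensional the summand $\rcohlgy{\partial\LL_{W}}$ cannot reach degree $\ell+2d$ unless $|W|\geq\ell+d=m$. Your version is more elementary and self-contained for this lemma --- it needs only the dimension of $\partial\LL$ rather than the homotopy decomposition of $\mathcal{Z}_{\partial\LL-\{1\}}$ --- and it yields the extra combinatorial fact that the Hochster index sets of any pair of generators with nonzero product partition the entire vertex set; the paper's version is shorter given that Corollary~\ref{deletioncor} is already in hand and needed elsewhere.
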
 

\begin{proof} 
First, in the Hochster ring for $\cohlgy{\mathcal{Z}_{\partial\LL}}$ we have 
$x_{\alpha}\cup y_{\alpha}\neq 0$. So by Theorem~\ref{zkcohlgy},  
$I_{\alpha}\cap J_{\alpha}=\emptyset$. 

We claim that $1\in I_{\alpha}\cup J_{\alpha}$. For if not, 
then $I_{\alpha}\cup J_{\alpha}$ is contained 
in the vertex set for $\partial\LL -\{1\}$. Therefore, by 
Theorem~\ref{zkcohlgy} all three of 
$\rcohlgy{\partial\LL_{I_{\alpha}}},\rcohlgy{\partial\LL_{J_{\alpha}}}, 
     \rcohlgy{\partial\LL_{I_{\alpha}\cup J_{\alpha}}}$   
are contained in $\cohlgy{\mathcal{Z}_{\partial\LL-\{1\}}}$. 
That is, $x_{\alpha},y_{\alpha}\in\cohlgy{\mathcal{Z}_{\partial\LL-\{1\}}}$ 
and so $x_{\alpha}\cup y_{\alpha}\in\cohlgy{\mathcal{Z}_{\partial\LL-\{1\}}}$. 
But by Corollary~\ref{deletioncor}, $\mathcal{Z}_{\partial\LL-\{1\}}$ 
is homotopy equivalent to a wedge of spheres, implying that all 
the cup products in its cohomology are zero, a contradiction. 

Now, since $1\in I_{\alpha}\cup J_{\alpha}$ and 
$I_{\alpha}\cap J_{\alpha}=\emptyset$, either $1\in I_{\alpha}$ 
or $1\in J_{\alpha}$. If $1\in I_{\alpha}$ then $1\notin J_{\alpha}$, 
implying that 
$x_{\alpha}\in\rcohlgy{\partial\LL_{I}}$ is not an elment of 
$\cohlgy{\mathcal{Z}_{\partial\LL-\{1\}}}$ while 
$y_{\alpha}\in\rcohlgy{\partial\LL_{J}}$ is. That is, $f^{\ast}(x_{\alpha})=0$ 
while $f^{\ast}(y_{\alpha})\neq 0$. Similarly, if $1\in J_{\alpha}$ 
then $f^{\ast}(x_{\alpha})\neq 0$ and $f^{\ast}(y_{\alpha})=0$. 
\end{proof}

\section{Panov's problem} 
\label{sec:panov} 

Recall from the Introduction that if $K$ is $\ell$ disjoint points 
then there is a homotopy equivalence 
\begin{equation} 
   \label{ptequiv} 
   \zk\simeq\bigvee_{k=3}^{\ell+1}  (S^{k})^{\wedge (k-2)\binom{\ell}{k-1}} 
\end{equation} 
and if $P$ is a simple polytope of dimension $d$ obtained from $\Delta^{d}$ 
by $\ell -1$ vertex cut operations (in any order) then there is a diffeomorphism 
\begin{equation} 
   \label{connequiv} 
   \mathcal{Z}(P)\cong 
          \#_{k=3}^{\ell+1}  (S^{k}\times S^{\ell+2d-k})^{\#(k-2)\binom{\ell}{k-1}}. 
\end{equation} 
Panov posed the problem of identifying the nature of the correspondence 
between the decompositions in~(\ref{ptequiv}) and~(\ref{connequiv}). 
In this section we give an answer to the problem.  

Let $P$ be a simple polytope of dimension $d$ which has been 
obtained from $\Delta^{d}$ by $\ell -1$ vertex cuts. Dualizing, 
$P^{\ast}$ is a stacked polytope of dimension $d$ with $\ell$ 
stacks. By Proposition~\ref{stackindep}, the homotopy type of 
$\mathcal{Z}(P)=\mathcal{Z}_{\partial P^{\ast}}$ is independent 
of the stacking order of $P^{\ast}$. We may therefore analyze the 
homotopy type of $\mathcal{Z}(P)$ by analyzing the homotopy 
type of $\mathcal{Z}_{\partial\LL}$. 

\begin{proof}[Proof of Theorem~\ref{Panovsoln}] 
Consider the inclusion 
\[\namedright{\partial\LL-\{1\}}{}{\partial\LL}.\]  
The moment-angle complex, regarded as a polyhedral product, is natural 
for maps of simplicial complexes, so we obtain an induced map of 
moment-angle complexes 
\[f\colon\namedright{\mathcal{Z}_{\partial\LL-\{1\}}}{}{\mathcal{Z}_{\partial\LL}}.\] 
By Corollary~\ref{deletioncor} and~(\ref{connequiv}), up to homotopy 
equivalences $f$ can be regarded as a map 
\[f\colon\namedright{\bigvee_{k=3}^{\ell+1}  (S^{k})^{\wedge (k-2)\binom{\ell}{k-1}}} 
      {}{\#_{k=3}^{\ell+1}  (S^{k}\times S^{\ell+2d-k})^{\#(k-2)\binom{\ell}{k-1}}}.\] 
In general, whenever $K^{\prime}$ is a full subcomplex of $K$, by~\cite{BBCG} 
there is a retract of $\cxx^{K^{\prime}}$ off of $\cxx^{K}$. In our case,  
since $\partial\LL-\{1\}$ is a full subcomplex of $\partial\LL$, the map $f$ 
has a left homotopy inverse 
\[g\colon\namedright{\#_{k=3}^{\ell+1}  
       (S^{k}\times S^{\ell+2d-k})^{\#(k-2)\binom{\ell}{k-1}}}{} 
       {\bigvee_{k=3}^{\ell+1}  (S^{k})^{\wedge (k-2)\binom{\ell}{k-1}}}.\] 
This proves parts~(a), (b) and (c) of the theorem. Part~(d) is Lemma~\ref{cupseparate}. 
\end{proof} 

More is true than stated in Theorem~\ref{Panovsoln}, and it may be useful 
to elaborate on it. As in Section~\ref{sec:cup}, let $\mathcal{I}$ be an index 
set which runs over all the products of two spheres in the connected 
sum~(\ref{connequiv}). Each $\alpha\in\mathcal{I}$ corresponds 
to a product of spheres $S^{k}\times S^{\ell +2d-k}$ which determines 
a nontrivial cup product in $\cohlgy{\mathcal{Z}_{\partial\LL}}$: 
if $x_{\alpha},y_{\alpha}\in\cohlgy{\mathcal{Z}_{\partial\LL}}$ are generators 
corresponding to the spheres 
$S^{k}\vee S^{\ell +2d-k}\subset S^{k}\times S^{\ell +2d-k}$ 
then $x_{\alpha}\cup y_{\alpha}\neq 0$. By 
Proposition~\ref{cupseparate}, $f^{\ast}$ is nonzero for one and only 
one of $x_{\alpha}$ or $y_{\alpha}$. It is not immediately clear which 
of $x_{\alpha}$ or $y_{\alpha}$ is sent nontrivially by~$f_{\ast}$ to  
$\cohlgy{\mathcal{Z}_{\partial\LL-\{1\}}}$, so write $z_{\alpha}$ 
for the generator which has nontrivial image. By Lemma~\ref{cupprods}, 
$z_{\alpha}$ is the dual of the Hurewicz image of the composite of inclusions 
\[i_{\alpha}\colon S^{t_{\alpha}}\hookrightarrow S^{k}\vee S^{\ell+2d-k}\hookrightarrow 
       {\#_{k=3}^{\ell+1}  (S^{k}\times S^{\ell+2d-k})^{\#(k-2)\binom{\ell}{k-1}}}\] 
where $t_{\alpha}$ is $k$ or $\ell+2d-k$ depending on whether 
$z_{\alpha}$ is $x_{\alpha}$ or $y_{\alpha}$. Taking the wedge sum of 
all the maps $i_{\alpha}$ for every $\alpha\in\mathcal{I}$ we obtain a map  
\[i\colon\namedright{\bigvee_{\alpha\in\mathcal{I}} S^{t_{\alpha}}}{} 
       {\#_{k=3}^{\ell+1}  (S^{k}\times S^{\ell+2d-k})^{\#(k-2)\binom{\ell}{k-1}}}\] 
with the property that $i^{\ast}$ factors through $f^{\ast}$. That is, 
there is a commutative diagram 
\begin{equation} 
  \label{cohlgyfactor} 
  \diagram 
          & \cohlgy{\bigvee_{\alpha\in\mathcal{I}} S^{t_{\alpha}}} \\ 
          \cohlgy{\bigvee_{k=3}^{\ell+1}  (S^{k})^{\wedge (k-2)\binom{\ell}{k-1}}} 
                \urto^{\phi} 
             & \cohlgy{\#_{k=3}^{\ell+1}  (S^{k}\times S^{\ell+2d-k})^{\#(k-2)\binom{\ell}{k-1}}} 
                 \uto^{i^{\ast}}\lto^-{f^{\ast}} 
  \enddiagram 
\end{equation}  
for some ring map $\phi$. Note at this point that $\phi$ 
need not be induced by a map of spaces, it exists only on the level of cohomology. 

By construction, $i$ is the inclusion of one factor in each product of 
spheres in the connected sum. So $i^{\ast}$ is an epimorphism taking 
ring generators to ring generators. 
The commutativity of~(\ref{cohlgyfactor}) therefore implies that $\phi$ is also 
an epimorphism, and must take ring generators to ring generators. Now observe 
that both $\bigvee_{\alpha\in\mathcal{I}} S^{t_{\alpha}}$ and 
$\bigvee_{k=3}^{\ell+1}  (S^{k})^{\wedge (k-2)\binom{\ell}{k-1}}$ 
are wedges of precisely the same number of spheres. So the 
domain and range of $\phi$ have the same number of ring  
generators. Hence $\phi$ must be an isomorphism. 

Finally, we geometrically realize $\phi$. Consider 
the composite 
\[\nameddright{\bigvee_{\alpha\in\mathcal{I}} S^{t_{\alpha}}}  
      {i}{\#_{k=3}^{\ell+1}  (S^{k}\times S^{\ell+2d-k})^{\#(k-2)\binom{\ell}{k-1}}} 
      {g}{\bigvee_{k=3}^{\ell+1}  (S^{k})^{\wedge (k-2)\binom{\ell}{k-1}}}.\] 
Taking cohomology, by~(\ref{cohlgyfactor}) we obtain 
$i^{\ast}\circ g^{\ast}=\phi\circ f^{\ast}\circ g_{\ast}$. Since 
$g$ is a left homotopy inverse of~$f$, we therefore have 
$i^{\ast}\circ g^{\ast}=\phi$. Since $\phi$ is an isomorphism, 
so is $i^{\ast}\circ g^{\ast}$, implying by Whitehead's Theorem 
that $g\circ i$ is a homotopy equivalence. Thus $\phi$ is 
the map induced in cohomology by the homotopy equivalence 
$g\circ i$. Note, however, that it may not be the case that 
there is a homotopy $i\simeq f\circ(g\circ i)$, that is, it may 
not be the case that~(\ref{cohlgyfactor}) can be improved to a homotopy 
commutative diagram on the level of spaces.

%%% The bibliography %%%
\bibliographystyle{amsalpha}

\end{document}